\newtheorem{theorem}{Theorem}[section]
\newtheorem*{theorem*}{Theorem B}
\newtheorem{lemma}[theorem]{Lemma}
\newtheorem{proposition}[theorem]{Proposition}
\newtheorem*{definition*}{Definition}
\newtheorem*{remark*}{Remark}
\newtheorem*{observation*}{Observation}
\newtheorem*{assumption*}{Assumption}
\theoremstyle{definition}
\theoremstyle{remark}
\begin{document}

\title[Toeplitz operators]{Toeplitz operators between  distinct Bergman spaces}



\author
{Siyu Wang}
\address
{Siyu WANG: School of
Mathematics and Statistics, Northeast Normal University, Changchun, Jilin 130024, P.R.China}
\email{wangsy696@nenu.edu.cn}

\author{Zipeng Wang}
\address{Zipeng WANG: College of Mathematics and Statistics, Chongqing University, Chongqing,
401331, P.R.China}
\email{zipengwang2012@gmail.com, zipengwang@cqu.edu.cn}
\begin{abstract}
For $-1<\alpha<\infty$, let $\omega_\alpha(z)=(1+\alpha)(1-|z|^2)^\alpha$ be the standard weight on the unit disk. In this note, we provide descriptions of the boundedness and compactness for the Toeplitz operators $T_{\mu,\beta}$ between distinct weighted Bergman spaces $L_{a}^{p}(\omega_{\alpha})$ and $L_{a}^{q}(\omega_{\beta})$ when $0<p\leq1$, $q=1$, $-1<\alpha,\beta<\infty$ and $0<p\leq 1<q<\infty, -1<\beta\leq\alpha<\infty$, respectively. Our results can be viewed as extensions of Pau and Zhao's work in \cite{Pau}. Moreover, partial of main results are new even in the unweighted settings.
\end{abstract}

\subjclass[2010]{47B35; 30H20}
\keywords{Toeplitz operator, Carleson measures, distinct Bergman spaces}

\maketitle

\section{Introduction}
For $0<p<+\infty$ and $-1<\alpha<+\infty$, let $L_a^p(\omega_\alpha)$ be the \textit{weighted Bergman space} which contains analytic functions $f$ on $\mathbb{D}$ such that
\[
\|f\|_{L^p(\omega_\alpha)}:=\bigg(\int_{\mathbb{D}}|f(z)|^p\omega_\alpha(z)dA(z)\bigg)^\frac{1}{p}<+\infty,
\]
where $dA$ is the normalized area measure on $\mathbb{D}$ and $\omega_\alpha(z)=(1+\alpha)(1-|z|^2)^\alpha$ is the standard weight. For a finite positive Borel measure $\mu$ on $\mathbb{D}$, define the \textit{Toeplitz operator} $T_{\mu,\alpha}$ by
\[
T_{\mu,\alpha}(f)(z)=\int_{\mathbb{D}}\frac{f(w)}{(1-z\bar{w})^{2+\alpha}}d\mu(w),~z\in\mathbb{D}.
\]

\vskip 0.1in

The boundedness and compactness of the Toeplitz operators $T_{\mu,\alpha}$ on the Bergman space $L^p_a(\omega_\alpha)$ are classical in operator theory on Bergman spaces \cite{zhu2007}.
When $1<p<+\infty$, these problems were solved by Luecking and Zhu \cite{L1987, Zhu1988}, while the case $p=1$ were considered in \cite{WL-2010, WZZ-2006}. In \cite{PR,PRS}, Pel\'{a}ez, R\"{a}tty\"{a} and Sierra completely characterized bounded and compact Toeplitz operators between distinct weighted Bergman spaces $L^p_a(\omega)$ and $L^q_a(\omega)$ for $1<p,q<+\infty$ if the weight $\omega$ is regular. Very recently, Duan, Guo and the current authors \cite{DGWW} extended Pel\'{a}ez, R\"{a}tty\"{a} and Sierra's work to the endpoint Bergman spaces and obtained necessary and sufficient conditions of the positive measure $\mu$ such that the Toeplitz operator between $L^p_a(\omega)$ and $L_a^1(\omega)$ is bounded and compact for $0<p\leq 1$.
Using some new characterizations on Carleson measures for weighted Bergman spaces, Pau and Zhao presented full characterizations \cite[Theorem 1.2 and Theorem 4.2]{Pau} of the boundedness and compactness of the Toeplitz operators $T_{\mu,\beta}: L_{a}^{p}(\omega_{\alpha})\to L_{a}^{q}(\omega_{\gamma})$, where the parameters $\alpha,\beta,\gamma, p$ and $q$ satisfy
\begin{align*}
2+\beta&>\max\{1,p^{-1}\}+p^{-1}(1+\alpha)\\
2+\beta&>\max\{1,q^{-1}\}+q^{-1}(1+\gamma)
\end{align*}
for $0<p, q<\infty$ and $-1<\alpha,\beta, \gamma<\infty$.

Observe that if $p=1$ and $q>1$, Pau and Zhao's result is valid for $\alpha<\beta$. This motivates us to study the questions of Toeplitz operators $T_{\mu,\beta}: L_{a}^{p}(\omega_{\alpha})\to L_{a}^{q}(\omega_{\gamma})$ when the ranges of parameters can not be covered by the ones of Pau and Zhao. In this note, we restrict our attentions to $\beta=\gamma$ partially because of its close relations to the two weight inequalities for Bergman projections \cite{FW}. In this case, Pau and Zhao's conditions read as
\begin{equation}
2+\beta>\max\{1,p^{-1}\}+p^{-1}(1+\alpha)\label{E:PZ01}
\end{equation}
\begin{equation}
2+\beta>\max\{1,q^{-1}\}+q^{-1}(1+\beta).\label{E:PZ02}
\end{equation}
We further note that the condition \eqref{E:PZ02} never hold if the image of Toeplitz operator $T_{\mu,\beta}$ is in $L_a^1(\omega_\beta)$. The first result of this paper is devoted to the boundedness and compactness of $T_{\mu,\beta}:L_{a}^{p}(\omega_{\alpha})\rightarrow L_{a}^{1}(\omega_{\beta})$ for $0<p\leq 1$ and $-1<\alpha,\beta<+\infty$.

\vskip 0.1in

For $0<p,q<\infty$ and $-1<\alpha<+\infty$, a finite positive Borel measure $\mu$ on $\mathbb{D}$ is said to be a \textit{$q$-Carleson measure for} $L_{a}^{p}(\omega_\alpha)$ if there exists a positive constant $C$ such that
\[
\|f\|_{L^q(\mu)}\leq C\|f\|_{L^p(\omega_\alpha)}
\]
for any $f\in L_a^p(\omega_\alpha)$. The measure $\mu$ is a \textit{vanishing $q$-Carleson measure for} $L_a^p(\omega_\alpha)$ \cite[page 166]{zhu2007} if
for any bounded sequence $\{f_{n}\}_{n=1}^{\infty}$ in $L_{a}^{p}(\omega_{\alpha})$ such that $\{f_{n}\}_{n=1}^{\infty}$ uniformly converges to $0$ on each compact subset of $\mathbb{D}$, we have
\[
\lim_{n\to+\infty}\int_{\mathbb{D}}|f_n(z)|^qd\mu(z)=0.
\]

For any $\alpha>0$, let $\mathcal{LB}^\alpha$ be the Banach space of analytic functions $f$ on $\mathbb{D}$ such that
\[
\|f\|_{\mathcal{LB}^{\alpha}}:=|f(0)|+\sup_{z\in\mathbb{D}}(1-|z|^{2})^{\alpha}
\bigg(\log\frac{2}{1-|z|^{2}}\bigg)|f'(z)|<+\infty
\]
and $\mathcal{LB}_{0}^\alpha$ be the set of analytic functions $f$ on $\mathbb{D}$ such that
\[
\lim_{|z|\rightarrow1^{-}}(1-|z|^{2})^{\alpha}\bigg(\log\frac{2}{1-|z|^{2}}\bigg)|f'(z)|=0.
\]

\vskip 0.1in

\begin{theorem}\label{P:standard2}
Let $\mu$ be a positive Borel measure, $0<p\leq1$ and $-1<\alpha,\beta<\infty$. Then
$T_{\mu,\beta}:L_{a}^{p}(\omega_{\alpha})\rightarrow L_{a}^{1}(\omega_{\beta})$
is bounded if and only if $\mu$ is a $\frac{1}{p}$-Carleson measure for $L_{a}^{1}(\omega_{\alpha})$ and $T_{\mu,\frac{2+\alpha}{p}-2}(1)\in \mathcal{LB}^{1}$.
\end{theorem}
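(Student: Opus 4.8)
The plan is to dualize the operator, reduce to the critical target weight $\gamma:=\frac{2+\alpha}{p}-2$, and then prove sufficiency by an atomic‑decomposition argument and necessity by testing. Since $\big(L_a^1(\omega_\beta)\big)^{*}=\mathcal{B}$ (the Bloch space) under the suitably regularized pairing $\langle\cdot,\cdot\rangle_{\omega_\beta}$, and since the Bergman projection $P_\beta$ fixes functions holomorphic in a neighbourhood of $\overline{\D}$, Fubini's theorem yields $\langle T_{\mu,\beta}f,g\rangle_{\omega_\beta}=\int_{\D}f\overline g\,d\mu$ for such $g$, and then for all $g\in\mathcal{B}$ by a limiting argument. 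Hence $T_{\mu,\beta}\colon L_a^p(\omega_\alpha)\to L_a^1(\omega_\beta)$ is bounded if and only if $\big|\int_{\D}f\overline g\,d\mu\big|\lesssim\|f\|_{L^p(\omega_\alpha)}\|g\|_{\mathcal{B}}$ for all $f\in L_a^p(\omega_\alpha)$, $g\in\mathcal{B}$; in particular this is independent of $\beta$, and running the computation with exponent $\gamma$ gives $\|T_{\mu,\beta}f\|_{L^1(\omega_\beta)}\asymp\|T_{\mu,\gamma}f\|_{L^1(\omega_\gamma)}$, so we may take $\beta=\gamma$, the critical exponent for the continuous embedding $L_a^p(\omega_\alpha)\hookrightarrow L_a^1(\omega_\gamma)$. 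I would also record, via the substitution $f\mapsto f^{p}$ in the averaged description of Carleson measures, that ``$\mu$ is a $\tfrac1p$‑Carleson measure for $L_a^1(\omega_\alpha)$'' is the same as ``$\int_{\D}|f|\,d\mu\lesssim\|f\|_{L^p(\omega_\alpha)}$'', i.e. $\mu$ is a $1$‑Carleson measure for $L_a^p(\omega_\alpha)$.

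For sufficiency I would invoke the atomic decomposition of $L_a^p(\omega_\alpha)$ for $0<p\le1$: writing $f=\sum_k c_k k_{a_k}$ with $\sum_k|c_k|^p\lesssim\|f\|_{L^p(\omega_\alpha)}^p$ and $\{k_a\}$ normalized atoms, the elementary bound $\sum_k|c_k|\le(\sum_k|c_k|^p)^{1/p}$ reduces the problem to the uniform estimate $\|T_{\mu,\gamma}k_a\|_{L^1(\omega_\gamma)}\lesssim1$. For this I would use the norm equivalence $\|F\|_{L^1(\omega_\gamma)}\asymp|F(0)|+\int_{\D}|F'(z)|(1-|z|^{2})^{1+\gamma}\,dA(z)$ together with the identity
$$(T_{\mu,\gamma}k_a)'(z)=k_a(z)\,(T_{\mu,\gamma}1)'(z)-(2+\gamma)\int_{\D}\frac{\overline w\,(k_a(z)-k_a(w))}{(1-z\overline w)^{3+\gamma}}\,d\mu(w),$$
and then split the measure into its restriction to a (slowly growing) hyperbolic disc about $a$ and the complement. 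On the complement, the factor $k_a(z)-k_a(w)$ and the kernel decay let one close the estimate using only the $1$‑Carleson property of $\mu$; near $a$ the first term dominates and is controlled by the hypothesis $T_{\mu,\gamma}(1)\in\mathcal{LB}^1$, the logarithmic gain in the definition of $\mathcal{LB}^1$ being exactly what compensates the logarithm coming from the borderline Forelli--Rudin integral $\int_{\D}(1-|z|^{2})^{1+\gamma}|1-z\overline w|^{-(3+\gamma)}\,dA(z)\asymp\log\frac{2}{1-|w|^{2}}$. The delicate point of the whole proof is precisely this bookkeeping: crude absolute‑value estimates only produce the weighted integral $\int_{\D}|f|\log\frac{2}{1-|w|^{2}}\,d\mu$, which is strictly larger than $\|f\|_{L^p(\omega_\alpha)}$, so one must keep the main term $k_a\,(T_{\mu,\gamma}1)'$ separate and exploit the cancellation in the remainder before taking moduli.

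For necessity, assume $T_{\mu,\gamma}\colon L_a^p(\omega_\alpha)\to L_a^1(\omega_\gamma)$ is bounded, equivalently the bilinear estimate above. Testing on single atoms $k_a$ (composed with point evaluations of $L_a^1(\omega_\gamma)$) gives the pointwise bound $\mu(D(a,r))\lesssim(1-|a|^{2})^{(2+\alpha)/p}$, and testing on randomized sums $\sum_k\varepsilon_k\lambda_k k_{a_k}$ over a Bergman lattice, using a Khinchine‑type inequality together with the $\ell^{1/p}$–$\ell^{1/(1-p)}$ duality, upgrades this to the full $\tfrac1p$‑Carleson condition for $L_a^1(\omega_\alpha)$. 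For $T_{\mu,\gamma}(1)\in\mathcal{LB}^1$ I would again use the displayed identity together with the norm equivalence $\|F\|_{L^1(\omega_\gamma)}\asymp|F(0)|+\|F'\|_{L^1(\omega_{\gamma+1})}$: boundedness controls $\|(T_{\mu,\gamma}k_a)'\|_{L^1(\omega_{\gamma+1})}$, the remainder is controlled by the already‑established Carleson condition, and pairing an atom at $a$ against a Bloch function comparable to $\log\frac{2}{1-|a|^{2}}$ near $a$ produces the weighted pointwise estimate on $(T_{\mu,\gamma}1)'$ that is the content of membership in $\mathcal{LB}^1$. I expect the main obstacle, in both directions, to be the interaction of the two conditions at the critical scale — establishing that each is separately necessary and that the two are jointly sufficient, with neither implying the other (for instance, a measure uniformly distributed on a circle has $T_{\mu,\gamma}(1)\equiv1\in\mathcal{LB}^1$ yet fails the Carleson condition, so $\mathcal{LB}^1$ alone never suffices), which is what forces both the $a$‑dependent splitting of $\mu$ and the careful tracking of logarithms above.
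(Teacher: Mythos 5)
Your necessity sketch is essentially the paper's route (test functions plus the extremal growth of Bloch functions; the Khinchine/randomization step is superfluous, since the pointwise bound $\mu(D(a,r))\lesssim(1-|a|^{2})^{(2+\alpha)/p}$ already \emph{is} the $\tfrac1p$-Carleson condition for $L^1_a(\omega_\alpha)$), but the sufficiency argument has a genuine gap at exactly the point you flag as delicate. After the reduction to the uniform atom bound, your estimate of the remainder $R_a(z)=(2+\gamma)\int_{\D}\bar w\,(k_a(z)-k_a(w))(1-z\bar w)^{-(3+\gamma)}\,d\mu(w)$ must, in order to invoke the Carleson property of the positive measure $\mu$, put absolute values inside the $d\mu$-integral. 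That majorant is \emph{not} uniformly bounded under the hypotheses: take $d\mu(w)=(1-|w|^{2})^{\gamma}dA(w)$, for which $\mu$ is a $\tfrac1p$-Carleson measure, $T_{\mu,\gamma}(1)$ is constant (so trivially in $\mathcal{LB}^{1}$), and $T_{\mu,\gamma}$ is a multiple of the identity. For $w\in D(a,\varepsilon_0)$ and $z$ in the dyadic shells $|1-z\bar w|\asymp 2^{-k}$ with $M(1-|a|^{2})\le 2^{-k}\le\delta_0$ one has $|k_a(z)|\le\tfrac12|k_a(w)|$, so $|k_a(z)-k_a(w)|\gtrsim|k_a(w)|$, and each shell contributes a constant to $\int(1-|z|^{2})^{1+\gamma}|1-z\bar w|^{-(3+\gamma)}dA(z)$; summing the $\asymp\log\frac{2}{1-|a|^{2}}$ shells and integrating $|k_a(w)|\,d\mu(w)$ over $D(a,\varepsilon_0)$ gives
\begin{equation*}
\int_{\D}\int_{\D}\frac{|k_a(z)-k_a(w)|}{|1-z\bar w|^{3+\gamma}}\,d\mu(w)\,(1-|z|^{2})^{1+\gamma}\,dA(z)\;\gtrsim\;\log\frac{2}{1-|a|^{2}},
\end{equation*}
even though for this measure the true remainder equals a constant multiple of $k_a'$ and is perfectly fine. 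So the cancellation in the factor $k_a(z)-k_a(w)$ does not suffice: at the intermediate scales $1-|a|\lesssim|1-z\bar w|\lesssim1$ the difference is comparable to $|k_a(w)|$ and the logarithm survives. The cancellation that saves the day sits in the $w$-integration against $\mu$ itself, which is lost the moment you take moduli, and which your hypotheses quantify only through $T_{\mu,\gamma}(1)$ — not through $\mu$-integrals weighted by $k_a$. Splitting $\mu$ into its restriction to a slowly growing hyperbolic disc about $a$ does not repair this: the far part is indeed controllable by the Carleson condition, but the restriction $\mu|_{D(a,R_a)}$ no longer inherits the $\mathcal{LB}^{1}$ information (which concerns the full, oscillation-sensitive transform), and the near-$a$ piece carrying $|k_a(w)|\log\frac{2}{1-|w|^{2}}$ cannot be absorbed, since no log-improved local bound $\mu(D(a,r))\lesssim(1-|a|^{2})^{2+\gamma}/\log\frac{2}{1-|a|^{2}}$ follows from (or is even compatible with) the hypotheses, as the same example shows.

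This is precisely why the paper argues through duality rather than atoms: there the difference is placed on the Bloch symbol, via the lemma $(1-|\zeta|^{2})\int_{\D}|g(z)-g(\zeta)|\,|1-z\bar\zeta|^{-(\frac{2+\alpha}{p}+1)}d\mu(z)\lesssim\|g\|_{\mathcal{B}}$, where the two-point Bloch estimate produces no logarithm, while the term in which the raw size $|g(\zeta)|\lesssim\log\frac{2}{1-|\zeta|^{2}}\|g\|_{\mathcal{B}}$ appears is exactly the one paired with $(T_{\mu,\frac{2+\alpha}{p}-2}(1))'$, so the $\mathcal{LB}^{1}$ decay cancels that logarithm. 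In other words, the allocation of the two hypotheses to the two pieces must be the reverse of the one in your plan; with your allocation (cancellation on the atom, $\mathcal{LB}^{1}$ on the main term) the absolute-value estimates cannot close, and repairing it leads you back to the bilinear/duality argument of the paper.
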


\vskip 0.1in

\begin{theorem} \label{T:compact1} Let $\mu$ be a positive Borel measure, $0<p\leq1$ and $-1<\alpha,\beta<\infty$. Then $T_{\mu,\beta}: L^{p}_{a}(\omega_{\alpha})\rightarrow L^{1}_{a}(\omega_{\beta})$ is compact if and only if $\mu$ is a vanishing $\frac{1}{p}$-Carleson measure for $L^{1}_{a}(\omega_{\alpha})$ and $T_{\mu,\frac{2+\alpha}{p}-2}(1)\in\mathcal{LB}_{0}^1$.
\end{theorem}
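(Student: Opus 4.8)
\emph{Proof strategy.} The plan is to follow the scheme of the proof of Theorem~\ref{P:standard2}, systematically replacing uniform bounds by their little-oh counterparts. Set $\gamma:=\frac{2+\alpha}{p}-2$, so that $2+\gamma=\frac{2+\alpha}{p}$ is exactly the exponent controlling the sharp growth $|f(z)|\lesssim\|f\|_{L^p(\omega_\alpha)}(1-|z|^2)^{-(2+\alpha)/p}$ of functions $f\in L^p_a(\omega_\alpha)$; this is why the auxiliary function $T_{\mu,\gamma}(1)$, rather than $T_{\mu,\beta}(1)$, occurs in the statement. Since the hypotheses here are stronger than those of Theorem~\ref{P:standard2} (a vanishing $\frac1p$-Carleson measure for $L^1_a(\omega_\alpha)$ is, in its geometric form, a $\frac1p$-Carleson measure, and $\mathcal{LB}_{0}^1\subset\mathcal{LB}^1$), that theorem already guarantees that $T_{\mu,\beta}\colon L^p_a(\omega_\alpha)\to L^1_a(\omega_\beta)$ is bounded whenever we need it to be; combined with the boundedness of point evaluations on $L^p_a(\omega_\alpha)$, a routine normal-families argument shows that $T_{\mu,\beta}$ is compact if and only if $\|T_{\mu,\beta}f_n\|_{L^1(\omega_\beta)}\to0$ for every sequence $(f_n)$ bounded in $L^p_a(\omega_\alpha)$ converging to $0$ uniformly on compact subsets of $\D$. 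Throughout I would also use the reformulation, via the duality $(L^1_a(\omega_\beta))^*=\mathcal B$ (the Bloch space) and the reproducing-kernel identity $\langle T_{\mu,\beta}f,g\rangle_{\omega_\beta}=c_\beta\int_\D f\overline{g}\,d\mu$, that $\|T_{\mu,\beta}f\|_{L^1(\omega_\beta)}\asymp\sup\{|\int_\D f\overline{g}\,d\mu|:\|g\|_{\mathcal B}\le1\}$.

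For the sufficiency, assume $\mu$ is a vanishing $\frac1p$-Carleson measure for $L^1_a(\omega_\alpha)$ and $T_{\mu,\gamma}(1)\in\mathcal{LB}_{0}^1$, and let $(f_n)$ be as in the criterion. I would follow the decomposition in the proof of Theorem~\ref{P:standard2}, which writes $\sup_{\|g\|_{\mathcal B}\le1}|\int_\D f\overline g\,d\mu|$ as a constant times the sum of a ``Carleson part'', governed by the $\frac1p$-Carleson quantity of $\mu$ for $L^1_a(\omega_\alpha)$ applied to $f$, and a ``boundary part'', governed by the $\mathcal{LB}^1$-seminorm of $T_{\mu,\gamma}(1)$ together with a weighted pairing of $f$ against $(T_{\mu,\gamma}(1))'$. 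Fix $r\in(0,1)$ and split $\D=\{|z|\le r\}\cup\{|z|>r\}$. On the compact part $f_n\to0$ uniformly, so both contributions there tend to $0$ as $n\to\infty$ for each fixed $r$. On the annulus $\{|z|>r\}$, the Carleson contribution is small uniformly in $n$ because $\mu$ is a \emph{vanishing} $\frac1p$-Carleson measure (raise to the power $p$ and use $\sup_n\|f_n\|_{L^p(\omega_\alpha)}<\infty$), while the boundary contribution is small uniformly in $n$ because $T_{\mu,\gamma}(1)\in\mathcal{LB}_{0}^1$ makes the relevant supremum over $\{|z|>r\}$ vanish as $r\to1^-$. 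Letting first $n\to\infty$ and then $r\to1^-$ yields $\|T_{\mu,\beta}f_n\|_{L^1(\omega_\beta)}\to0$.

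For the necessity, suppose $T_{\mu,\beta}$ is compact. To obtain the vanishing Carleson condition, test on the normalized atoms $f_a(z)=\bigl((1-|a|^2)/(1-\overline az)^2\bigr)^{(2+\alpha)/p}$, which satisfy $\|f_a\|_{L^p(\omega_\alpha)}\asymp1$ and converge to $0$ uniformly on compact subsets of $\D$ as $|a|\to1^-$; compactness forces $\|T_{\mu,\beta}f_a\|_{L^1(\omega_\beta)}\to0$, and the lower bound $\|T_{\mu,\beta}f_a\|_{L^1(\omega_\beta)}\gtrsim\mu(D(a,r))(1-|a|^2)^{-(2+\alpha)/p}$ (with $D(a,r)$ a Bergman disk of fixed radius, obtained as in Theorem~\ref{P:standard2} by choosing a suitable $g$ in the Bloch ball and restricting the integration to $D(a,r)$) then gives $\mu(D(a,r))(1-|a|^2)^{-(2+\alpha)/p}\to0$, the geometric form of the vanishing $\frac1p$-Carleson condition for $L^1_a(\omega_\alpha)$. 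To obtain $T_{\mu,\gamma}(1)\in\mathcal{LB}_{0}^1$, one first notes that compactness implies boundedness, so $T_{\mu,\gamma}(1)\in\mathcal{LB}^1$ by Theorem~\ref{P:standard2}; to upgrade to the little-oh space, one runs the little-oh version of the estimate in that proof, bounding $(1-|z|^2)\log\frac{2}{1-|z|^2}\,|(T_{\mu,\gamma}(1))'(z)|$ from above by a constant multiple of $\|T_{\mu,\beta}h_z\|_{L^1(\omega_\beta)}$ (up to a term negligible as $|z|\to1^-$) for appropriately normalized functions $h_z$ concentrated near $z$, the logarithm entering through the critical integral estimate $\int_\D(1-|w|^2)^\beta|1-\overline wz|^{-(2+\beta)}\,dA(w)\asymp\log\frac{2}{1-|z|^2}$ (equivalently, through the growth $|g(w)|\lesssim\|g\|_{\mathcal B}\log\frac{2}{1-|w|^2}$ of Bloch functions); since the $h_z$ tend to $0$ uniformly on compact sets as $|z|\to1^-$, compactness makes $\|T_{\mu,\beta}h_z\|_{L^1(\omega_\beta)}\to0$, whence $T_{\mu,\gamma}(1)\in\mathcal{LB}_{0}^1$.

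The main obstacle is the part of the argument involving $T_{\mu,\gamma}(1)$: the condition $T_{\mu,\gamma}(1)\in\mathcal{LB}^1$ is genuinely stronger than $\mu$ being a $\frac1p$-Carleson measure (the $\frac1p$-Carleson condition alone only yields $T_{\mu,\gamma}(1)\in\mathcal B$, i.e.\ without the logarithmic improvement), since it exploits the cancellation in the analytic function $T_{\mu,\gamma}(1)$; consequently, both isolating the ``boundary part'' of $\|T_{\mu,\beta}f\|_{L^1(\omega_\beta)}$ in terms of $(T_{\mu,\gamma}(1))'$ on the sufficiency side, and constructing test functions $h_z$ that recapture $(1-|z|^2)\log\frac{2}{1-|z|^2}|(T_{\mu,\gamma}(1))'(z)|$ with the correct logarithmic factor on the necessity side, are delicate --- this is precisely where the matching of the parameter $\gamma=\frac{2+\alpha}{p}-2$ and the creation and absorption of the logarithmic weight must be done carefully. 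Once Theorem~\ref{P:standard2} and the standard geometric description of (vanishing) $\frac1p$-Carleson measures for $L^1_a(\omega_\alpha)$ are available, the remaining work is the routine little-oh version of the boundedness proof, carried out through the $\{|z|\le r\}\cup\{|z|>r\}$ splitting described above.
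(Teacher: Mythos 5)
Your proposal is correct and follows essentially the same route as the paper: the sufficiency is the little-oh version of the boundedness proof via the Bloch duality and the same two-term decomposition (Carleson part plus the part controlled by $(T_{\mu,\frac{2+\alpha}{p}-2}(1))'$), with the compact-part/annulus splitting, and the necessity uses the same families of test functions $f_z$, $g_z$, $h_z$ together with the identity relating $T_{\mu,\eta}(1)$ and $(T_{\mu,\eta-1}(1))'$. The only point the paper spells out that you treat as "routine" is the equivalence of the sequential compactness criterion with genuine compactness when $p=1$ (Proposition 3.1 in the paper), but your overall argument is the same.
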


\vskip 0.1in

As we have mentioned above, when the Toeplitz operator $T_{\mu,\beta}$ acts on the Bergman space $L_a^1(\omega_\alpha)$, Pau and Zhao's characterization can be applied for $\beta>\alpha$. We shall consider the ranges of $0<p\leq1<q<\infty$ and $-1<\beta\leq\alpha<\infty$ and obtain the following results.

\begin{theorem}\label{T:standard:two}
Let $\mu$ be a positive Borel measure, $0<p\leq1<q<\infty$ and $-1<\beta\leq\alpha<\infty$. Then
$T_{\mu,\beta}:L_{a}^{p}(\omega_{\alpha})\rightarrow L_{a}^{q}(\omega_{\beta})$
is bounded if and only if $\mu$ is a $(\frac{1}{p}+\frac{t}{q'})$-Carleson measure for $L_{a}^{1}(\omega_{\alpha})$, where $t=\frac{\beta+2}{\alpha+2}$ and $\frac{1}{q}+\frac{1}{q'}=1$.
\end{theorem}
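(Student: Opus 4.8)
The plan is to reduce the two-weight Toeplitz boundedness to a single Carleson-measure condition by a duality/factorization argument, following the strategy of Pau--Zhao but adapted to the range $0<p\le 1<q<\infty$, $-1<\beta\le\alpha<\infty$ that their hypotheses do not cover. First I would set up the standard pairing: for $f\in L^p_a(\omega_\alpha)$ and $g$ in the dual of $L^q_a(\omega_\beta)$ (which, since $q>1$, is $L^{q'}_a(\omega_\beta)$ under the $\omega_\beta$-pairing), one has
\[
\langle T_{\mu,\beta}f, g\rangle_{\omega_\beta}
= \int_{\mathbb D} f(w)\,\overline{g(w)}\,d\mu(w),
\]
by Fubini together with the reproducing property of the weighted Bergman kernel $(1-z\bar w)^{-(2+\beta)}$ on $L^{q'}_a(\omega_\beta)$. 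Thus $T_{\mu,\beta}:L^p_a(\omega_\alpha)\to L^q_a(\omega_\beta)$ is bounded if and only if the bilinear form $(f,g)\mapsto \int_{\mathbb D} f\bar g\,d\mu$ is bounded on $L^p_a(\omega_\alpha)\times L^{q'}_a(\omega_\beta)$.

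Next I would identify the right testing class. Since $0<p\le 1$, the atomic decomposition for $L^p_a(\omega_\alpha)$ reduces testing $f$ to normalized reproducing kernels $f_a(z)=(1-|a|^2)^{(2+\alpha)(1-1/p)}/(1-\bar a z)^{(2+\alpha)/p}$ (up to the usual constants), so boundedness of the form is equivalent to a uniform estimate for $\int_{\mathbb D} |f_a(w)|\,|g(w)|\,d\mu(w)$ over such $f_a$ and all $g$ in the unit ball of $L^{q'}_a(\omega_\beta)$. Taking the supremum over $g$ first turns this into the statement that the measure $d\nu_a:=|f_a|\,d\mu$ is, uniformly in $a$, a bounded functional against $L^{q'}_a(\omega_\beta)$, i.e. (by the known $1$-Carleson-measure description for $L^{q'}_a(\omega_\beta)$, $q'>1$ finite — here I would invoke the Carleson-measure characterizations quoted from \cite{Pau}) a uniform Carleson-type condition on Carleson squares $Q_I$: $\mu(Q_I)\cdot (\text{size of }f_a\text{ on }Q_I)\lesssim |I|^{\,(2+\beta)/q'}$ when $a$ is the center of $Q_I$. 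Comparing $|f_a|\sim (1-|a|^2)^{(2+\alpha)(1-1/p)-(2+\alpha)/p}$ on $Q_I$ and bookkeeping the exponents, the condition collapses to
\[
\mu(Q_I)\lesssim |I|^{\,(2+\alpha)\left(\frac1p+\frac{t}{q'}\right)},\qquad t=\frac{\beta+2}{\alpha+2},
\]
which is exactly the geometric characterization of $\mu$ being a $\bigl(\frac1p+\frac{t}{q'}\bigr)$-Carleson measure for $L^1_a(\omega_\alpha)$ (recall: $\mu$ is an $s$-Carleson measure for $L^1_a(\omega_\alpha)$ iff $\mu(Q_I)\lesssim |I|^{s(2+\alpha)}$). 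For the converse, assuming this Carleson condition, I would run the estimate forward: expand $T_{\mu,\beta}f$, apply the atomic decomposition of $f\in L^p_a(\omega_\alpha)$, use the ($p\le1$) $\ell^p$-triangle inequality to reduce to kernels, and then for each kernel estimate the $L^q(\omega_\beta)$ norm of $T_{\mu,\beta}f_a$ by a Schur-type / Forelli--Rudin integral estimate, where the Carleson condition on $\mu$ supplies precisely the decay needed to sum the series and close the bound. One must also verify that the Carleson condition guarantees $T_{\mu,\beta}f$ is actually analytic and in $L^q_a(\omega_\beta)$ (not merely that the sublinear quantity is controlled), which follows from absolute convergence of the defining integral under the same testing estimate applied to point evaluations.

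The main obstacle I anticipate is the bookkeeping in the two different ``scales'': the source space has weight $\omega_\alpha$ and exponent $p\le1$, while the target dual has weight $\omega_\beta$ and exponent $q'$, and the exponent $t=\frac{\beta+2}{\alpha+2}\le 1$ enters through the mismatch $\beta\le\alpha$ — this is the reason the hypothesis $\beta\le\alpha$ is needed (so that $t\le1$ and the relevant embeddings/integral estimates converge) and presumably the reason Pau--Zhao's conditions fail here. Concretely, the delicate point is proving the Forelli--Rudin estimate
\[
\int_{\mathbb D}\frac{(1-|w|^2)^{\,c}}{|1-\bar w z|^{\,d}}\,d\mu(w)\lesssim \frac{(1-|z|^2)^{\,c'}}{(\cdots)}
\]
with $\mu$ only an $\bigl(\frac1p+\frac{t}{q'}\bigr)$-Carleson measure — I would handle this by a dyadic decomposition of $\mathbb D$ into Whitney boxes around $z$, applying the Carleson estimate box-by-box and summing a geometric series, checking that the exponent inequality $\frac1p+\frac{t}{q'}>1$ (which holds since $p\le1$) keeps every series convergent. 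Everything else — Fubini's theorem to justify the pairing, the atomic decomposition, passing between boundedness of the bilinear form and of the operator — is routine once the exponent arithmetic is pinned down.
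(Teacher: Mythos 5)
Your necessity argument is essentially the paper's: test the bilinear form $\langle T_{\mu,\beta}f,g\rangle_{\omega_\beta}=\int_{\mathbb D} f\bar g\,d\mu$ on normalized kernels in both slots (one normalized in $L^p_a(\omega_\alpha)$, the other in $L^{q'}_a(\omega_\beta)$) and read off $\mu(D(z,r))\lesssim(1-|z|^2)^{(\alpha+2)(\frac1p+\frac{t}{q'})}$; that part is sound. The gap is in sufficiency. You assert that uniform boundedness of $g\mapsto\int|g|\,d\nu_a$ on $L^{q'}_a(\omega_\beta)$, with $\nu_a=|f_a|\,d\mu$, is ``by the known $1$-Carleson-measure description'' equivalent to a condition on Carleson squares. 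It is not: this is an embedding $L^{q'}_a(\omega_\beta)\hookrightarrow L^1(\nu_a)$ whose target exponent $1$ is strictly smaller than the source exponent $q'$, and such embeddings are characterized by a Luecking-type integral condition on the averaging function of $\nu_a$, not by a pointwise bound on squares. Only the implication ``bounded functional $\Rightarrow$ square condition'' survives (by testing on kernels), which is why your chain yields necessity but cannot yield sufficiency; note also that bounding $\sup_a\|T_{\mu,\beta}f_a\|_{L^q(\omega_\beta)}$ by duality leads you straight back to the same functional, so the only non-circular version of your plan is the direct Forelli--Rudin computation of $\int_{\mathbb D}|T_{\mu,\beta}f_a|^q\omega_\beta\,dA$ with $\mu$ replaced by $(1-|u|^2)^{(\alpha+2)(\frac1p+\frac{t}{q'})-2}dA(u)$ via an $r$-lattice. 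That route is feasible in principle but requires a two-point kernel estimate you have only gestured at, so as written the sufficiency direction is not proved.

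The idea you are missing is much simpler, and it is exactly where the hypothesis $\beta\le\alpha$ enters. Set $s=\frac{pq'}{q'+pt}$, so that $\frac1s=\frac1p+\frac{t}{q'}$; the Carleson hypothesis is precisely the embedding $\|h\|_{L^1(\mu)}\lesssim\|h\|_{L^s(\omega_\alpha)}$ for analytic $h$. Apply this to the analytic product $h=fg$, then H\"older with the conjugate exponents $\frac{p}{s}$ and $\frac{q'}{ts}$ (conjugate because $\frac{s}{p}+\frac{st}{q'}=1$) to get $\|fg\|_{L^s(\omega_\alpha)}\le\|f\|_{L^p(\omega_\alpha)}\|g\|_{L^{q'/t}(\omega_\alpha)}$, and finally invoke the inclusion $L^{q'}_a(\omega_\beta)\subseteq L^{q'/t}_a(\omega_\alpha)$ (Zhu, Proposition 4.17), which is the only place $\beta\le\alpha$, i.e.\ $t\le1$, is used. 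This bounds $|\langle T_{\mu,\beta}f,g\rangle_{\omega_\beta}|\lesssim\|f\|_{L^p(\omega_\alpha)}\|g\|_{L^{q'}(\omega_\beta)}$ with no atomic decomposition and no kernel estimates against $\mu$.
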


\vskip 0.1in

\begin{theorem}\label{T:compact standard}
Let $\mu$ be a positive Borel measure, $0<p\leq1<q<\infty$ and $-1<\beta\leq\alpha<\infty$. Then
$T_{\mu,\beta}:L_{a}^{p}(\omega_{\alpha})\rightarrow L_{a}^{q}(\omega_{\beta})$
is compact if and only if $\mu$ is a vanishing $(\frac{1}{p}+\frac{t}{q'})$-Carleson measure for $L_{a}^{1}(\omega_{\alpha})$, where $t=\frac{\beta+2}{\alpha+2}$ and $\frac{1}{q}+\frac{1}{q'}=1$.
\end{theorem}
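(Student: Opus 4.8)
\textbf{Proof proposal for Theorem \ref{T:compact standard}.}

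The plan is to run the argument in parallel with the proof of Theorem \ref{T:standard:two}, upgrading every "bounded''/"$s$-Carleson'' to "compact''/"vanishing $s$-Carleson'', where $s:=\tfrac1p+\tfrac{t}{q'}$ and $t=\tfrac{\beta+2}{\alpha+2}$. Note first that $0<p\le1<q<\infty$ forces $s\ge 1$ (and $t\le 1$ since $\beta\le\alpha$), so all the measures in play sit in the regime where $s$-Carleson and vanishing $s$-Carleson measures for $L^1_a(\omega_\alpha)$ admit the usual geometric description through the averages $\mu(D(a,\delta))/(1-|a|^2)^{(2+\alpha)s}$, with $(2+\alpha)s=\tfrac{2+\alpha}{p}+\tfrac{2+\beta}{q'}$. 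The two standing tools are: (i) the sequential characterization of compactness, namely $T_{\mu,\beta}$ is compact iff $\|T_{\mu,\beta}f_n\|_{L^q(\omega_\beta)}\to 0$ whenever $\{f_n\}$ is bounded in $L^p_a(\omega_\alpha)$ and $f_n\to 0$ uniformly on compact subsets of $\mathbb{D}$ (the nontrivial direction uses Montel's theorem together with Fatou's lemma); and (ii) the duality identity $\langle T_{\mu,\beta}f,g\rangle_\beta=\int_{\mathbb{D}}f\,\overline g\,d\mu$ for $f\in L^p_a(\omega_\alpha)\cap H^\infty$ and $g\in L^{q'}_a(\omega_\beta)$, obtained from Fubini and the reproducing property of $(1-z\bar w)^{-(2+\beta)}$ for $L^2_a(\omega_\beta)$; since $1<q<\infty$ this lets us move freely between $T_{\mu,\beta}$ and the bilinear form $(f,g)\mapsto\int_{\mathbb{D}}f\overline g\,d\mu$ on $L^p_a(\omega_\alpha)\times L^{q'}_a(\omega_\beta)$.

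For sufficiency, assume $\mu$ is a vanishing $s$-Carleson measure for $L^1_a(\omega_\alpha)$. For $0<r<1$ write $\mu=\mu\restriction_{\overline{r\mathbb{D}}}+\mu_r$ with $d\mu_r=\ch_{\{|z|>r\}}\,d\mu$, so that $T_{\mu,\beta}=T_{\mu\restriction_{\overline{r\mathbb{D}}},\beta}+T_{\mu_r,\beta}$. The first summand is compact: expanding $(1-z\bar w)^{-(2+\beta)}=\sum_{n\ge0}c_n(z\bar w)^n$ and truncating the sum exhibits it as a norm limit of finite rank operators, the tail being dominated by $\sup_{|w|\le r}|w|^n$. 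For the second, the geometric description of vanishing Carleson measures shows that the $s$-Carleson constant of $\mu_r$, comparable to $\sup_a\mu_r(D(a,\delta))/(1-|a|^2)^{(2+\alpha)s}$, tends to $0$ as $r\to1^-$: disks $D(a,\delta)$ with $|a|$ small miss $\{|z|>r\}$ entirely, while for $|a|$ large the quantity is small by hypothesis. Plugging this into the quantitative form of Theorem \ref{T:standard:two} gives $\|T_{\mu_r,\beta}\|_{L^p_a(\omega_\alpha)\to L^q_a(\omega_\beta)}\to 0$, whence $T_{\mu,\beta}$ is a norm limit of compact operators and therefore compact.

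For necessity, assume $T_{\mu,\beta}$ is compact; in particular it is bounded, so by Theorem \ref{T:standard:two} the measure $\mu$ is already $s$-Carleson for $L^1_a(\omega_\alpha)$ and it remains to upgrade $O$ to $o$. Fix $c>\max\{(2+\alpha)/p,(2+\beta)/q'\}$ and set
\[
f_a(z)=(1-|a|^2)^{\,c-(2+\alpha)/p}(1-\bar a z)^{-c},\qquad g_a(z)=(1-|a|^2)^{\,c-(2+\beta)/q'}(1-\bar a z)^{-c}.
\]
Then $\|f_a\|_{L^p(\omega_\alpha)}\asymp\|g_a\|_{L^{q'}(\omega_\beta)}\asymp 1$ and $f_a,g_a\to0$ uniformly on compacta as $|a|\to1^-$. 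The point of using the \emph{same} power of $1-\bar a z$ is the positivity
\[
f_a(w)\,\overline{g_a(w)}=(1-|a|^2)^{\,2c-(2+\alpha)/p-(2+\beta)/q'}\,|1-\bar a w|^{-2c}\ \ge\ 0,
\]
so no cancellation occurs and, since $|1-\bar a w|\asymp 1-|a|^2$ on $D(a,\delta)$,
\[
\int_{\mathbb{D}}f_a\,\overline{g_a}\,d\mu\ \ge\ \int_{D(a,\delta)}f_a\,\overline{g_a}\,d\mu\ \asymp\ \frac{\mu(D(a,\delta))}{(1-|a|^2)^{(2+\alpha)s}}.
\]
On the other hand, the duality identity and the sequential characterization of compactness yield
\[
\int_{\mathbb{D}}f_a\,\overline{g_a}\,d\mu=|\langle T_{\mu,\beta}f_a,g_a\rangle_\beta|\le\|T_{\mu,\beta}f_a\|_{L^q(\omega_\beta)}\,\|g_a\|_{L^{q'}(\omega_\beta)}\longrightarrow 0\qquad(|a|\to1^-).
\]
Comparing the two displays gives $\mu(D(a,\delta))/(1-|a|^2)^{(2+\alpha)s}\to0$, i.e.\ $\mu$ is a vanishing $s$-Carleson measure for $L^1_a(\omega_\alpha)$.

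The step demanding the most care is the sufficiency direction, specifically the appeal to a \emph{quantitative} estimate $\|T_{\nu,\beta}\|_{L^p_a(\omega_\alpha)\to L^q_a(\omega_\beta)}\lesssim\|\nu\|_{s\text{-Carleson}}$ applied with $\nu=\mu_r$: one must verify that the proof of Theorem \ref{T:standard:two}, which for $p\le1$ passes through an atomic decomposition of $L^p_a(\omega_\alpha)$ and the description of $q'$-type Carleson measures for $L^{q'}_a(\omega_\beta)$ governed by the exponent $t=(\beta+2)/(\alpha+2)$, indeed produces a constant comparable to the Carleson constant of the measure, uniformly in $r$. Granting this, and the routine facts that the truncations $T_{\mu\restriction_{\overline{r\mathbb{D}}},\beta}$ are compact, the remainder is bookkeeping. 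Necessity, by contrast, is soft once the positivity trick $f_a\overline{g_a}\ge0$ is in place, as it removes any need to control off-diagonal mass of $\mu$.
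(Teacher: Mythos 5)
Your proposal is correct, and its overall architecture coincides with the paper's: necessity via the test functions $f_a,g_a$ with the same power of $(1-\bar a z)$ (so that $f_a\overline{g_a}\ge 0$ and the pairing dominates $\mu(D(a,\delta))/(1-|a|^2)^{(2+\alpha)s}$), and sufficiency via splitting $\mu$ into a compactly supported part plus a near-boundary tail whose $s$-Carleson constant is small, fed into the quantitative form of Theorem \ref{T:standard:two} (which the paper's inequality \eqref{density} indeed supplies, with constant controlled by the Carleson embedding constant of the measure). The one genuine divergence is your treatment of the compactly supported piece: you prove outright that $T_{\mu\restriction_{\overline{r\mathbb{D}}},\beta}$ is compact by truncating the Taylor expansion of the kernel into finite-rank operators and then invoke ``a norm limit of compact operators is compact,'' whereas the paper never isolates that operator's compactness; it instead tests $T_{\mu_{r_0},\beta}(f_n)$ against $g\in L_a^{q'}(\omega_\beta)$ and uses $|f_n|<\varepsilon$ on $r_0\mathbb{D}$ to get a direct $\varepsilon$-bound on the whole sequence. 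Your route is a little cleaner conceptually, but since $L_a^p(\omega_\alpha)$ is only a quasi-Banach space for $p<1$ and compactness is \emph{defined} sequentially there, you should add the one-line verification that the limit argument survives: for $\|f_n\|_{L^p(\omega_\alpha)}\le 1$ with $f_n\to 0$ on compacta, $\limsup_n\|T_{\mu,\beta}f_n\|_{L^q(\omega_\beta)}\le\|T_{\mu_r,\beta}\|_{L^p_a(\omega_\alpha)\to L^q_a(\omega_\beta)}\to 0$. (Minor quibble: the paper's proof of Theorem \ref{T:standard:two} goes through H\"older and the Carleson embedding rather than an atomic decomposition, but that does not affect your argument.)
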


\vskip 0.1in

The rest of this paper is organized as follows. In section 2,
we prove Theorem \ref{P:standard2} and Theorem \ref{T:standard:two} which provide characterizations for the boundedness of $T_{\mu,\beta}:L_{a}^{p}(\omega_{\alpha})\rightarrow L_{a}^{q}(\omega_{\beta})$ when $0<p\leq q=1$, $-1<\alpha,\beta<\infty$ and $0<p\leq1<q<\infty$, $-1<\beta\leq\alpha<\infty$, respectively.
In section 3, we first discuss the definition of compact Toeplitz operators on $L^p_a(\omega_\alpha)$ for $0<p<1$ and $p\geq 1$. Then we prove Theorem \ref{T:compact1} and Theorem \ref{T:compact standard} to give descriptions for the compactness of $T_{\mu,\beta}:L_{a}^{p}(\omega_{\alpha})\rightarrow L_{a}^{q}(\omega_{\beta})$ under the same assumptions as the boundedness ones.

\vskip 0.1in

In this paper, we will use $a\lesssim b$ to represent that there exists a constant $C=C(\cdot)>0$ satisfying $a\leq Cb$, where the constant $C(\cdot)$ depends on the parameters indicated in the parenthesis, varying under different cases. Moreover, if $a\lesssim b$ and $b\gtrsim a$, then we denote by $a\asymp b$.

\vskip 0.1in

\section{Boundedness of Toeplitz operators}
The goal of this section is to prove Theorem \ref{P:standard2} and Theorem \ref{T:standard:two}. We start with the following result (see \cite[Theorem 5.3]{zhu2007}) which describes the dual space of $L_{a}^{1}(\omega_{\alpha})$ for $-1<\alpha<+\infty$ and is frequently used throughout our work.

\begin{lemma}\label{dual bloch}
Let $-1<\alpha<+\infty$. Then the dual space of $L_{a}^{1}(\omega_{\alpha})$ is the Bloch space $\mathcal{B}$ via the integral pairing,
\[
\langle f,g\rangle_{L^{2}(\omega_{\alpha})}=\lim_{r\rightarrow1^{-}}\int_{\mathbb{D}}f(rz)\overline{g(rz)}\omega_{\alpha}(z)dA(z),
~~f\in L_{a}^{1}(\omega_{\alpha}),~g\in\mathcal{B},
\]
where the Bloch space $\mathcal{B}$ is the Banach space of analytic functions such that
\[
\|f\|_{\mathcal{B}}:=|f(0)|+\sup_{z\in\mathbb{D}}(1-|z|^2)|f'(z)|<+\infty.
\]
\end{lemma}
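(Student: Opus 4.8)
The plan is to use the weighted Bergman projection
\[
P_\alpha u(z)=(1+\alpha)\int_{\mathbb{D}}\frac{u(w)(1-|w|^2)^\alpha}{(1-z\bar w)^{2+\alpha}}\,dA(w)
\]
as the bridge between $L_a^1(\omega_\alpha)$ and $\mathcal{B}$, establishing the two containments $\mathcal{B}\hookrightarrow (L_a^1(\omega_\alpha))^{\ast}$ and $(L_a^1(\omega_\alpha))^{\ast}\hookrightarrow\mathcal{B}$ with comparable norms. Two properties of $P_\alpha$ drive everything: (i) $P_\alpha$ maps $L^\infty(\mathbb{D})$ \emph{boundedly onto} $\mathcal{B}$ with $\|P_\alpha h\|_{\mathcal{B}}\lesssim\|h\|_\infty$ and with a bounded preimage for each Bloch function; and (ii) $P_\alpha$ is self-adjoint on $L^2(\omega_\alpha)$ and reproduces analytic functions, $P_\alpha f=f$ for $f\in L^2_a(\omega_\alpha)$ (in particular for polynomials).

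\textbf{From $\mathcal{B}$ to functionals.} Given $g\in\mathcal{B}$, use surjectivity to choose $h\in L^\infty$ with $P_\alpha h=g$ and $\|h\|_\infty\lesssim\|g\|_{\mathcal{B}}$. For $f\in L^1_a(\omega_\alpha)$ and $0<r<1$, comparing Taylor coefficients against the orthogonal basis $\{z^n\}$ in $L^2(\omega_\alpha)$ gives the bookkeeping identity
\[
\int_{\mathbb{D}}f(rz)\overline{g(rz)}\,\omega_\alpha(z)\,dA(z)=\langle f_{r^2},g\rangle_{L^2(\omega_\alpha)},\qquad f_{r^2}(z):=f(r^2z),
\]
where $f_{r^2}$ is bounded analytic, so the right-hand integral converges absolutely (a bounded factor against the logarithmic growth of $g$ damped by $\omega_\alpha$, which is integrable for $\alpha>-1$). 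Since $f_{r^2}\in L^2_a(\omega_\alpha)$, the reproducing property and self-adjointness give $\langle f_{r^2},g\rangle=\langle f_{r^2},P_\alpha h\rangle=\langle P_\alpha f_{r^2},h\rangle=\langle f_{r^2},h\rangle_{L^2(\omega_\alpha)}$, whence
\[
\Big|\int_{\mathbb{D}}f(rz)\overline{g(rz)}\,\omega_\alpha\,dA\Big|\le\|h\|_\infty\,\|f_{r^2}\|_{L^1(\omega_\alpha)}\le\|h\|_\infty\,\|f\|_{L^1(\omega_\alpha)}.
\]
As $f_{r^2}\to f$ in $L^1(\omega_\alpha)$ when $r\to1^-$, the limit defining $\langle f,g\rangle_{L^2(\omega_\alpha)}$ exists and is dominated by $\|h\|_\infty\|f\|_{L^1(\omega_\alpha)}\lesssim\|g\|_{\mathcal{B}}\|f\|_{L^1(\omega_\alpha)}$, so $g$ induces a bounded functional of norm $\lesssim\|g\|_{\mathcal{B}}$.

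\textbf{From functionals to $\mathcal{B}$.} Conversely, let $\Lambda\in(L^1_a(\omega_\alpha))^{\ast}$. Extend it by Hahn--Banach to all of $L^1(\omega_\alpha)$ without increasing the norm; since $\omega_\alpha\,dA$ is a finite positive measure, $(L^1(\omega_\alpha))^{\ast}=L^\infty(\mathbb{D})$, so there is $h\in L^\infty$ with $\|h\|_\infty=\|\Lambda\|$ and $\Lambda(f)=\int_{\mathbb{D}}f\,\bar h\,\omega_\alpha\,dA$ for all $f\in L^1_a(\omega_\alpha)$. Put $g:=P_\alpha h\in\mathcal{B}$, so $\|g\|_{\mathcal{B}}\lesssim\|h\|_\infty=\|\Lambda\|$. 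For a polynomial $f$ the integral against $\bar g$ converges absolutely and agrees with the limit pairing, and the reproducing property with self-adjointness yield $\int f\,\bar h\,\omega_\alpha\,dA=\langle f,h\rangle_{L^2(\omega_\alpha)}=\langle P_\alpha f,h\rangle=\langle f,P_\alpha h\rangle=\langle f,g\rangle_{L^2(\omega_\alpha)}$. Because polynomials are dense in $L^1_a(\omega_\alpha)$ and both $\Lambda$ and $f\mapsto\langle f,g\rangle$ are bounded, $\Lambda(f)=\langle f,g\rangle_{L^2(\omega_\alpha)}$ throughout. The two constructions are mutually inverse up to the norm comparisons, giving the isomorphism $\mathcal{B}\cong(L^1_a(\omega_\alpha))^{\ast}$.

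\textbf{Main obstacle.} The analytic core is fact (i), the surjectivity of $P_\alpha:L^\infty\to\mathcal{B}$. Boundedness is the routine Forelli--Rudin estimate applied to $(P_\alpha h)'(z)=(1+\alpha)(2+\alpha)\int_{\mathbb{D}}\bar w\,h(w)(1-|w|^2)^\alpha(1-z\bar w)^{-(3+\alpha)}\,dA(w)$, which yields $(1-|z|^2)|(P_\alpha h)'(z)|\lesssim\|h\|_\infty$ since $\int_{\mathbb{D}}(1-|w|^2)^\alpha|1-z\bar w|^{-(3+\alpha)}\,dA(w)\asymp(1-|z|^2)^{-1}$. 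Surjectivity is the delicate step: starting from the reproducing integral for $g$ and integrating by parts in $w$, one rewrites $g(z)-g(0)$ as $P_\alpha$ applied to a function built from $(1-|w|^2)g'(w)$, which lies in $L^\infty$ with norm $\lesssim\|g\|_{\mathcal{B}}$ precisely because $g$ is Bloch; absorbing the constant $g(0)$ through $P_\alpha 1=1$ then exhibits the required $h\in L^\infty$. I expect the bookkeeping in this integration by parts (tracking the correct power of $1-|w|^2$ and the bounded multiplier it produces) to be the one place requiring genuine care, as the remaining ingredients are standard duality, density, and dilation arguments.
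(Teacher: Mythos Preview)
The paper does not prove this lemma at all; it is stated as a known result with a citation to \cite[Theorem 5.3]{zhu2007} and used as a tool throughout. Your proposal supplies a complete and correct proof, and it is essentially the classical argument found in that reference: use the boundedness and surjectivity of the weighted Bergman projection $P_\alpha:L^\infty\to\mathcal{B}$ together with the self-adjointness and reproducing property of $P_\alpha$ on $L^2(\omega_\alpha)$ to identify $(L^1_a(\omega_\alpha))^\ast$ with $\mathcal{B}$, invoking Hahn--Banach and $L^1$--$L^\infty$ duality for the converse direction. So there is no methodological difference to report; you have simply filled in what the authors chose to quote.
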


\vskip 0.1in

For $z\in\mathbb{D}$ and $r\in(0,1)$, we will use $B(z,r)=\{\zeta\in\mathbb{D}:|z-\zeta|<r\}$ and $D(z,r)=\{\zeta\in\mathbb{D}:\rho(z,\zeta)<r\}$ to represent the Euclidean disk and the pseudohyperbolic disk respectively, where $\rho(z,\zeta)=\big|\frac{z-\zeta}{1-\bar{z}\zeta}\big|$.

\begin{proposition}\label{P:carleson} Let $\mu$ be a positive Borel measure, $0<p\leq 1$ and $-1<\alpha,\beta<\infty$. If
$T_{\mu,\beta}:L_{a}^{p}(\omega_{\alpha})\rightarrow L_{a}^{1}(\omega_{\beta})$ is bounded, then $\mu$ is a $\frac{1}{p}$-Carleson measure for $L_a^1(\omega_\alpha)$.
\end{proposition}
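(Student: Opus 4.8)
The plan is to extract the $\frac{1}{p}$-Carleson measure estimate from the operator norm bound of $T_{\mu,\beta}$ by a duality argument, testing the operator against a well-chosen family of functions.

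First I would reformulate what needs to be proven. By a standard characterization (and since $\frac1p \geq 1$), $\mu$ is a $\frac{1}{p}$-Carleson measure for $L_a^1(\omega_\alpha)$ if and only if there is a constant $C>0$ so that $\mu(D(z,r)) \lesssim \bigl((1-|z|^2)^{2+\alpha}\bigr)^{1/p}$ uniformly in $z\in\mathbb{D}$, equivalently $\int_{\mathbb D}|f|^{1/p}\,d\mu \lesssim \|f\|_{L^1(\omega_\alpha)}^{1/p}$; it suffices to test on the normalized reproducing-kernel-type functions. So fix $a \in \mathbb D$ and consider the test function
\[
f_a(w) = \frac{(1-|a|^2)^{s}}{(1-\bar a w)^{s + \frac{2+\alpha}{p}}}
\]
for a suitable large exponent $s$; these satisfy $\|f_a\|_{L^p(\omega_\alpha)} \asymp 1$ (with constants independent of $a$), and $|f_a(w)|^{1/p} \asymp (1-|a|^2)^{-(2+\alpha)/p}$ for $w \in D(a,r)$.

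Next, the key step: since $T_{\mu,\beta}: L_a^p(\omega_\alpha) \to L_a^1(\omega_\beta)$ is bounded, we have $\|T_{\mu,\beta}(f_a)\|_{L^1(\omega_\beta)} \lesssim \|f_a\|_{L^p(\omega_\alpha)} \asymp 1$. Now I would pair $T_{\mu,\beta}(f_a)$ against a bounded Bloch function using Lemma \ref{dual bloch}: choosing the test Bloch function $g_a(z) = \frac{(1-\bar a z)^{\,\text{something}}}{\cdots}$ normalized so that $\|g_a\|_{\mathcal B} \lesssim 1$ and so that the integral pairing $\langle T_{\mu,\beta}(f_a), g_a\rangle_{L^2(\omega_\beta)}$ can be computed or lower-bounded. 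By Fubini (justified by absolute convergence after the $r\to 1^-$ regularization in Lemma \ref{dual bloch}), this pairing equals $\int_{\mathbb D} f_a(w)\,\overline{\,\widetilde{P_\beta g_a}(w)}\,d\mu(w)$ where the inner integral reproduces a function comparable to $g_a$ (or to a power of $(1-\bar a w)^{-1}$); the cleanest choice is to take $g_a$ so that this reproduces $(1-|a|^2)^{-\epsilon}(1-\bar a w)^{-?}$, making the integrand comparable to $|f_a(w)|\cdot(1-|a|^2)^{-(2+\alpha)(\frac1p - 1)}$ on $D(a,r)$, i.e.\ essentially $|f_a(w)|^{1/p}$ up to the normalization. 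Restricting the $w$-integral to $D(a,r)$ and bounding below then yields
\[
(1-|a|^2)^{-\frac{2+\alpha}{p}}\,\mu(D(a,r)) \;\lesssim\; \bigl|\langle T_{\mu,\beta}(f_a), g_a\rangle\bigr| \;\lesssim\; \|T_{\mu,\beta}(f_a)\|_{L^1(\omega_\beta)}\,\|g_a\|_{\mathcal B} \;\lesssim\; 1,
\]
which is exactly the desired uniform Carleson bound $\mu(D(a,r)) \lesssim (1-|a|^2)^{(2+\alpha)/p}$.

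The main obstacle I expect is the bookkeeping of exponents in choosing $f_a$ and $g_a$ simultaneously: one needs $\|f_a\|_{L^p(\omega_\alpha)}\asymp 1$, $\|g_a\|_{\mathcal B}\asymp 1$, the Bloch pairing $\langle T_{\mu,\beta}f_a, g_a\rangle$ to be well-defined and amenable to Fubini, and the resulting integrand to be genuinely comparable to $|f_a|^{1/p}$ (not merely bounded by it) on the pseudohyperbolic disk $D(a,r)$ so that the lower bound survives — in particular the $r\to1^-$ limit in the pairing and the non-analytic power $|f_a|^{1/p}$ need care (one typically pairs against the analytic function $f_a$ directly and absorbs the extra power $\frac1p-1\ge 0$ into the normalization of $g_a$, using $|1-\bar a w|\asymp 1-|a|^2$ on $D(a,r)$). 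A secondary technical point is justifying the interchange of integration, which is handled by the dilation $f(rz)$ built into the statement of Lemma \ref{dual bloch} together with dominated convergence. Once the parameters are pinned down, the rest is the routine integral estimate $\int_{\mathbb D}\frac{(1-|w|^2)^c}{|1-\bar a w|^{d}}dA(w) \asymp (1-|a|^2)^{c+2-d}$ for $d > c+2$.
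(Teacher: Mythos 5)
Your proposal is correct and follows essentially the same route as the paper: the paper tests $T_{\mu,\beta}$ on $f_z(\zeta)=(1-|z|^2)^{(\alpha+2)(c-\frac1p)}(1-\bar z\zeta)^{-(\alpha+2)c}$, pairs against the uniformly bounded Bloch function $g_z(\zeta)=(1-|z|^2)^{(\alpha+2)c}(1-\bar z\zeta)^{-(\alpha+2)c}$ via the $L^1_a(\omega_\beta)$--Bloch duality, identifies the pairing with $\int_{\mathbb D}f_z\overline{g_z}\,d\mu$, and restricts to $D(z,r)$ exactly as you describe. The only work left in your sketch is pinning down the exponent of $g_z$, which the paper resolves by the choice above.
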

\begin{proof}
Let $c\in (\frac{1}{p}, +\infty)$ and $z\in\mathbb{D}$, consider the functions
\[
f_{z}(\zeta)=\frac{(1-|z|^{2})^{(\alpha+2)(c-\frac{1}{p})}}{(1-\bar{z}\zeta)^{\alpha c+2c}}
\]
and
\[ g_{z}(\zeta)=\frac{(1-|z|^{2})^{\alpha c+2c}}{(1-\bar{z}\zeta)^{\alpha c+2c}},~\zeta\in\mathbb{D}.
\]
From \cite[Lemma 3.10]{zhu2007} and direct calculations, we have
\[
\|f_z\|_{L^{p}(\omega_{\alpha})}\asymp1
\]
and
\[
\|g_z\|_{\mathcal{B}}\asymp1.
\]
Let $r\in(0,1)$. By Lemma \ref{dual bloch} and the boundedness of $T_{\mu,\beta}$,
\begin{align}\label{f_z}
\langle T_{\mu,\beta}(f_z),g_z\rangle_{L^{2}(\omega_{\beta})}
&=\int_{\mathbb{D}}f_{z}(\zeta)\overline{g_{z}(\zeta)}d\mu(\zeta)\nonumber\\
&=(1-|z|^{2})^{(\alpha+2)(2c-\frac{1}{p})}\int_{\mathbb{D}}\frac{1}{|1-\bar{z}\zeta|^{2\alpha c+4c}}d\mu(\zeta)\nonumber\\
&\geq (1-|z|^{2})^{(\alpha+2)(2c-\frac{1}{p})}\int_{D(z,r)}\frac{1}{|1-\bar{z}\zeta|^{2\alpha c+4c}}d\mu(\zeta)\nonumber\\
&\asymp\frac{\mu(D(z,r))}{(1-|z|^{2})^{\frac{\alpha+2}{p}}}
\end{align}
and
\[
\langle T_{\mu,\beta}(f_z),g_z\rangle_{L^{2}(\omega_{\beta})}\leq \|T_{\mu,\beta}(f_z)\|_{L^{1}(\omega_{\beta})}\|g_z\|_{\mathcal{B}}\leq \|T_{\mu,\beta}\|_{L^p(\omega_{\alpha})\to L^1(\omega_{\beta})}\|f_z\|_{L^p(\omega_{\alpha})}\|g_z\|_{\mathcal{B}}.
\]
Therefore,
\[
\sup_{z\in\mathbb{D}}\frac{\mu(D(z,r))}{(1-|z|^{2})^{\frac{\alpha+2}{p}}}<+\infty.
\]
It follows that $\mu$ is a $\frac{1}{p}$-Carleson measure for $L_{a}^{1}(\omega_{\alpha})$.
\end{proof}

Before stating the proof of Theorem \ref{P:standard2}, we give a useful estimation of Bloch functions.

\begin{lemma}\label{P:bloch} Let $0<p\leq1$, $-1<\alpha<\infty$ and $t=\frac{\alpha+2}{p}+1$. If $\mu$ is a $\frac{1}{p}$-Carleson measure for $L_{a}^{1}(\omega_{\alpha})$, then for any $g\in\mathcal{B}$,
\[
\sup_{\zeta\in\mathbb{D}}(1-|\zeta|^{2})\int_{\mathbb{D}}\frac{|g(z)-g(\zeta)|}{|1-z\bar{\zeta}|
^t}d\mu(z)
\lesssim\|g\|_{\mathcal{B}}.
\]
\end{lemma}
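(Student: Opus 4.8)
The plan is to exploit the standard growth estimate for Bloch functions, namely that $g\in\mathcal{B}$ implies $|g(z)-g(\zeta)|\lesssim \|g\|_{\mathcal{B}}\,\beta(z,\zeta)$, where $\beta$ is the Bergman (hyperbolic) metric, together with the more convenient pointwise bound
\[
|g(z)-g(\zeta)|\lesssim \|g\|_{\mathcal{B}}\left(1+\log\frac{|1-z\bar\zeta|^2}{(1-|z|^2)(1-|\zeta|^2)}\right),
\]
which follows by integrating $|g'|$ along the segment from $\zeta$ to $z$. First I would fix $\zeta\in\mathbb{D}$ and substitute this bound into the integral, so the problem reduces to estimating
\[
(1-|\zeta|^2)\int_{\mathbb{D}}\frac{1}{|1-z\bar\zeta|^t}\left(1+\log\frac{|1-z\bar\zeta|^2}{(1-|z|^2)(1-|\zeta|^2)}\right)d\mu(z),
\]
and it suffices to show this is $\lesssim 1$ using only that $\mu$ is a $\frac1p$-Carleson measure for $L^1_a(\omega_\alpha)$, i.e. $\mu(D(z,r))\lesssim (1-|z|^2)^{(\alpha+2)/p}$ for each fixed $r$.

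The natural device is a dyadic decomposition of $\mathbb{D}$ adapted to $\zeta$: set $R_0=D(\zeta,r_0)$ for a fixed $r_0\in(0,1)$ and let $R_k$ ($k\geq 1$) be the pseudohyperbolic annuli where $\rho(z,\zeta)$ ranges over a geometric scale approaching $1$; equivalently one can use the "tent''/Whitney-type regions where $1-|z|^2\asymp 2^{-k}(1-|\zeta|^2)$ and $|1-z\bar\zeta|\asymp 2^{-k}(1-|\zeta|^2)$ is replaced by the appropriate comparisons. On such a region one has $|1-z\bar\zeta|\asymp 2^{k}$-scaled relative to $(1-|\zeta|^2)$ and $\log\big(|1-z\bar\zeta|^2/((1-|z|^2)(1-|\zeta|^2))\big)\lesssim k$, while the $\mu$-mass of the region is controlled by covering it with boundedly many pseudohyperbolic disks $D(w,r)$ with $1-|w|^2\asymp 2^{-k}(1-|\zeta|^2)$ and invoking the Carleson condition to get a bound of order (number of disks)$\times (2^{-k}(1-|\zeta|^2))^{(\alpha+2)/p}$, where the number of disks needed grows at most polynomially, say like $2^{k}$ (or a fixed power of it) coming from the area/measure ratio. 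Assembling these pieces, the $k$-th term contributes something like $(1-|\zeta|^2)\cdot (\text{polynomial in }k)\cdot 2^{k}\cdot \big(2^{-k}(1-|\zeta|^2)\big)^{(\alpha+2)/p}\cdot \big((1-|\zeta|^2)\big)^{-t}$, and since $t=\frac{\alpha+2}{p}+1$ the powers of $(1-|\zeta|^2)$ cancel exactly, leaving a sum $\sum_k (\text{poly in }k)\, 2^{k(1-(\alpha+2)/p)}$; one must check that the geometric ratio $2^{1-(\alpha+2)/p}$ is $<1$, i.e. that $(\alpha+2)/p>1$, which holds since $p\leq 1$ and $\alpha>-1$, so the series converges and its sum is an absolute constant. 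The logarithmic factor only inserts an extra polynomial-in-$k$ weight, which does not affect convergence of a geometric series.

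The main obstacle I anticipate is bookkeeping the covering numbers and the exponents cleanly: one needs the correct estimate for how many pseudohyperbolic disks of radius $r$ centered at points with $1-|w|^2\asymp 2^{-k}(1-|\zeta|^2)$ are required to cover the relevant annular region, and one must make sure the resulting power of $2^k$ is strictly dominated by the gain $2^{-k(\alpha+2)/p}$ from the Carleson estimate after the $(1-|\zeta|^2)$ powers are reconciled against $|1-z\bar\zeta|^{-t}$. A clean way to organize this, avoiding explicit covering counts, is instead to bound the integral over each annulus by $\sup$ of the integrand times $\mu$ of a slightly larger pseudohyperbolic disk, but since the annuli are not single disks one is forced either to sum geometrically over sub-disks or to use the subharmonicity/averaging form of the Carleson condition. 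Either route works; I would present the dyadic-annulus version since it is the most transparent and the log factor is handled uniformly. Once the geometric series is summed, taking the supremum over $\zeta$ gives the claimed inequality with an implied constant depending only on $p,\alpha$ and the Carleson constant of $\mu$.
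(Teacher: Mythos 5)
Your argument is correct in outline but takes a genuinely different route from the paper. You start from the logarithmic growth estimate $|g(z)-g(\zeta)|\lesssim\|g\|_{\mathcal{B}}\bigl(1+\log\tfrac{|1-z\bar\zeta|^2}{(1-|z|^2)(1-|\zeta|^2)}\bigr)$ and run a dyadic Carleson-box decomposition by hand; the paper instead invokes the power-type Lipschitz estimate of Wang--Liu (\cite[Proposition 2.4]{WL-2010}), namely $|g(z)-g(\zeta)|\lesssim\|g\|_{\mathcal{B}}\,|z-\zeta|\,|1-\bar z\zeta|^{1-2\lambda}(1-|z|^2)^{\lambda-1}(1-|\zeta|^2)^{\lambda-1}$ for an adjustable $\lambda\in(\max\{0,3-t\},1)$, which trades the logarithm for powers; after discretizing $\mu$ over an $r$-lattice via subharmonicity and the bound $\mu(D(w,r))\lesssim(1-|w|^2)^{(\alpha+2)/p}$, everything collapses to a single Forelli--Rudin integral from \cite[Lemma 3.10]{zhu2007}. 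The two discretizations are morally the same, and both hinge on the same strict inequality $(\alpha+2)/p>1$ (equivalently $t>2$): you need it for your geometric series to beat the covering numbers, and the paper needs it for the interval of admissible $\lambda$ to be nonempty. What the paper's route buys is that the logarithm never appears and the final estimate is a one-line citation; what yours buys is self-containedness. One bookkeeping caveat: your displayed ``$k$-th term'' only accounts for the layers $1-|z|^2\asymp 2^{-k}(1-|\zeta|^2)$ inside the single box $|1-z\bar\zeta|\asymp 1-|\zeta|^2$; to exhaust $\mathbb{D}$ you must also sum over the scales $|1-z\bar\zeta|\asymp 2^{m}(1-|\zeta|^2)$, $m\ge 0$, i.e.\ run a double sum. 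Doing so yields an extra factor $2^{-m}$ in the $m$-th block (from $-t+(\alpha+2)/p=-1$), with the logarithm contributing only $O(k+m)$, so the double series converges and the conclusion stands; but the second index needs to be made explicit, and ``the number of disks grows at most polynomially, say like $2^k$'' should be corrected to exponential growth that is dominated by the exponential gain $2^{-k(\alpha+2)/p}$ from the Carleson condition.
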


\vskip 0.1in

To prove Lemma \ref{P:bloch}, recall that a sequence $\{z_{j}\}_{j=1}^{\infty}\subseteq\mathbb{D}$ is called \textit{$\delta$-separated} ($\delta>0$) if $\rho(z_{i},z_{k})\geq\delta$ when $i\neq k$. For $r\in(0,1)$, a sequence $\{z_{j}\}_{j=1}^{\infty}$ is said to be an \textit{$r$-lattice} if $\{z_{j}\}_{j=1}^{\infty}$ satisfies
\[\mathbb{D}=\bigcup\limits_{j=1}^{\infty}D(z_{j},5r)
\]
and $\frac{r}{5}-$separated. We will frequently use the fact that every $z\in\mathbb{D}$ belongs to at most $N=N(r)$ pseudohyperbolic disks $D(z_{j},r)$, where $\{z_{j}\}_{j=1}^{\infty}$ is a separated sequence (see Lemma 12 of Chapter 2 in \cite{D1} or Lemma 3 in \cite{Lu}).

\begin{proof}[Proof of Lemma \ref{P:bloch}]
Let $t=\frac{\alpha+2}{p}+1$ and
\[
\lambda\in(\textmd{max}\{0,3-t\},1).
\]
For any $g\in\mathcal{B}$, it follows from \cite[Proposition 2.4]{WL-2010} that
\begin{align}\label{bloch-}
\sup_{z,\zeta\in\mathbb{D},z\neq\zeta}
\frac{|g(z)-g(\zeta)|(1-|z|^{2})^{1-\lambda}(1-|\zeta|^{2})^{1-\lambda}}{|z-\zeta||1-\bar{z}\zeta|^{1-2\lambda}}\lesssim\|g\|_{\mathcal{B}}.
\end{align}
Let $\{\xi_{j}\}_{j=1}^{\infty}$ be an $r$-lattice for a fixed $r\in(0,1)$. Using the property of subharmonicity and the condition that $\mu$ is a $\frac{1}{p}$-Carleson measure for $L_{a}^{1}(\omega_{\alpha})$, for $\zeta\in\mathbb{D}$, we deduce
\begin{align}\label{lattice}
\int_{\mathbb{D}}\frac{(1-|z|^{2})^{\lambda-1}}{|1-\bar{z}\zeta|^{2\lambda+t-2}}d\mu(z)
\leq&\sum_{j=1}^{\infty}\mu(D(\xi_{j},5r))
\sup_{z\in D(\xi_{j},5r)}\frac{(1-|z|^{2})^{\lambda-1}}{|1-\bar{z}\zeta|^{2\lambda+t-2}}\nonumber\\
\leq&\sum_{j=1}^{\infty}\mu(D(\xi_{j},5r))
\sup_{z\in D(\xi_{j},5r)}\frac{(1-|z|^{2})^{\lambda-1}}{(\frac{r(1-|z|^{2})}{2(1+r)})^{2}}\int_{B(z,\frac{r(1-|z|^{2})}{2(1+r)})}
\frac{1}{|1-\bar{u}\zeta|^{2\lambda+t-2}}dA(u)\nonumber\\
\lesssim&\sum_{j=1}^{\infty}\mu(D(\xi_{j},5r))
\sup_{z\in D(\xi_{j},5r)}\frac{1}{(1-|z|^{2})^{\alpha+2}}
\int_{D(z,r)}\frac{(1-|u|^{2})^{\lambda+\alpha-1}}{|1-\bar{u}\zeta|^{2\lambda+t-2}}dA(u)\nonumber\\
\lesssim&\sum_{j=1}^{\infty}\frac{\mu(D(\xi_{j},5r))}{(1-|\xi_{j}|^{2})^{\alpha+2}}
\int_{D(\xi_{j},6r)}\frac{(1-|u|^{2})^{\lambda+\alpha-1}}{|1-\bar{u}\zeta|^{2\lambda+t-2}}dA(u)\nonumber\\
\asymp&\sum_{j=1}^{\infty}\frac{\mu(D(\xi_{j},5r))}{(1-|\xi_{j}|^{2})^{\frac{\alpha+2}{p}}}
\int_{D(\xi_{j},6r)}\frac{(1-|u|^{2})^{\lambda+\alpha-1+\frac{(2+\alpha)(1-p)}{p}}}{|1-\bar{u}\zeta|^{2\lambda+t-2}}dA(u)\nonumber\\
\lesssim&\int_{\mathbb{D}}\frac{(1-|u|^{2})^{\lambda+t-4}}
{|1-\bar{u}\zeta|^{2\lambda+t-2}}dA(u).
\end{align}
Let
\[
I(\zeta)=(1-|\zeta|^{2})\int_{\mathbb{D}}\frac{|g(z)-g(\zeta)|}{|1-z\bar{\zeta}|^{t}}d\mu(z).
\]
Combining \eqref{bloch-}, \eqref{lattice} with \cite[Lemma 3.10]{zhu2007},
for any $\zeta\in\mathbb{D}$, we obtain
\begin{align*}\label{2s3}
I(\zeta)=&(1-|\zeta|^{2})^{\lambda}
\int_{\mathbb{D}}\frac{|g(z)-g(\zeta)|(1-|z|^{2})^{1-\lambda}(1-|\zeta|^{2})^{1-\lambda}|}{|z-\zeta||1-\bar{z}\zeta|^{1-2\lambda}}
\frac{|z-\zeta|}{|1-\bar{z}\zeta|}\frac{(1-|z|^{2})^{\lambda-1}}{|1-\bar{z}\zeta|^{2\lambda+t-2}}d\mu(z)\nonumber\\
\lesssim&\|g\|_{\mathcal{B}}(1-|\zeta|^{2})^{\lambda}\int_{\mathbb{D}}\frac{(1-|z|^{2})^{\lambda-1}}
{|1-\bar{z}\zeta|^{2\lambda+t-2}}d\mu(z)\nonumber\\
\lesssim&\|g\|_{\mathcal{B}}(1-|\zeta|^{2})^{\lambda}\int_{\mathbb{D}}\frac{(1-|u|^{2})^{\lambda+t-4}}
{|1-\bar{u}\zeta|^{2+(\lambda+t-4)+\lambda}}dA(u)\nonumber\\
\asymp&\|g\|_{\mathcal{B}},
\end{align*}
which completes the proof.
\end{proof}

Now we are going to prove Theorem \ref{P:standard2}. Recall that the Toeplitz operator $T_{\mu,\beta}:L_{a}^{p}(\omega_{\alpha})\rightarrow L_{a}^{q}(\omega_{\gamma})$ is bounded \cite[page 164]{zhu2007} if there exists a constant $C_{p,q}$ such that
for all polynomials $f$
\[
\|T_{\mu,\beta}(f)\|_{L^q(\omega_\gamma)}\leq C_{p,q}\|f\|_{L^p(\omega_\alpha)}.
\]

\begin{proof}[Proof of Theorem \ref{P:standard2}]
Let
$
\eta=\frac{2+\alpha-p}{p}
$
and
$
\omega_{\eta}(z)=(\frac{2+\alpha}{p})(1-|z|^{2})^{\frac{2+\alpha-p}{p}}
$
be a standard weight. From Lemma \ref{dual bloch}, for any $f\in L^p_a(\omega_\alpha)$ and $g\in\mathcal{B}$, it follows that
\begin{eqnarray*}
\langle T_{\mu,\beta}(f), g\rangle_{L^{2}(\omega_{\beta})}=\langle f,g\rangle_{L^{2}(\mu)}
=\int_{\mathbb{D}}P_{\omega_{\eta}}(f\bar{g})(z)d\mu(z)+
\int_{\mathbb{D}}(f\bar{g})(z)d\mu(z)-\int_{\mathbb{D}}P_{\omega_{\eta}}(f\bar{g})(z)d\mu(z),
\end{eqnarray*}
where $P_{\omega_{\eta}}$ is the orthogonal projection from $L^{2}(\omega_{\eta})$ to the weighted Bergman space $L_{a}^{2}(\omega_{\eta})$.
Let
\begin{eqnarray}\label{bound:I1}
I_{1}=\int_{\mathbb{D}}P_{\omega_{\eta}}(f\bar{g})(z)d\mu(z)
\end{eqnarray}
and
\begin{eqnarray}\label{bound:I2}
I_{2}=\int_{\mathbb{D}}(f\bar{g})(z)d\mu(z)-\int_{\mathbb{D}}P_{\omega_{\eta}}(f\bar{g})(z)d\mu(z).
\end{eqnarray}

\vskip 0.1in

Now we are devoted to estimating $I_{1}$. By \cite[Theorem 4.14]{zhu2007},
\begin{eqnarray*}
|I_{1}| &\leq&\int_{\mathbb{D}}|f(\zeta)|\omega_{\eta}(\zeta)|g(\zeta)|
\big|\int_{\mathbb{D}}\frac{1}{(1-\bar{z}\zeta)^{\eta+2}}d\mu(z)\big|dA(\zeta)\\
&=&\int_{\mathbb{D}}|f(\zeta)|^{p}|f(\zeta)|^{1-p}
\omega_{\eta}(\zeta)|g(\zeta)|\big|\int_{\mathbb{D}}\frac{1}{(1-\bar{z}\zeta)^{\eta+2}}d\mu(z)\big|dA(\zeta)\\
&\leq&\int_{\mathbb{D}}|f(\zeta)|^{p}\frac{\|f\|_{L_{a}^{p}(\omega_{\alpha})}^{1-p}}{(1-|\zeta|^{2})^{\frac{(2+\alpha)(1-p)}{p}}}
\omega_{\eta}(\zeta)|g(\zeta)|\big|\int_{\mathbb{D}}\frac{1}{(1-\bar{z}\zeta)^{\eta+2}}d\mu(z)\big|dA(\zeta)\\
&=&\|f\|_{L_{a}^{p}(\omega_{\alpha})}^{1-p}\int_{\mathbb{D}}|f(\zeta)|^{p}(1-|\zeta|^{2})^{\alpha}
(1-|\zeta|^{2})|g(\zeta)|\big|\int_{\mathbb{D}}\frac{1}{(1-\bar{z}\zeta)^{\eta+2}}d\mu(z)\big|dA(\zeta).
\end{eqnarray*}
Denote
\begin{eqnarray*}
\widetilde{I}(\zeta)&\triangleq&(1-|\zeta|^{2})|g(\zeta)|\big|\int_{\mathbb{D}}\frac{1}{(1-\bar{z}\zeta)^{\eta+2}}d\mu(z)\big|\\
&=&(1-|\zeta|^{2})|g(\zeta)|\bigg|\int_{\mathbb{D}}\frac{1-\bar{z}\zeta}{(1-\bar{z}\zeta)^{\eta+2}}
d\mu(z)+\int_{\mathbb{D}}\frac{\bar{z}\zeta}{(1-\bar{z}\zeta)^{\eta+2}}d\mu(z)\bigg|.
\end{eqnarray*}
Observe that $\mu$ is a $\frac{1}{p}$-Carleson measure for $L_{a}^{1}(\omega_{\alpha})$ and $T_{\mu,\eta-1}(1)\in \mathcal{LB}^{1}$. Combining
\cite[Lemma 2.1]{WL-2010} with \cite[Lemma 3.10]{zhu2007}, for $\zeta\in\mathbb{D}$,
\begin{eqnarray*}
\widetilde{I}(\zeta)
&\leq &(1-|\zeta|^{2})|g(\zeta)|\int_{\mathbb{D}}\frac{1}{|1-\bar{z}\zeta|^{\eta+1}}d\mu(z)+
(1-|\zeta|^{2})|g(\zeta)||T_{\mu,\eta-1}(1)'(\zeta)|\\
&\lesssim&(1-|\zeta|^{2})\log\frac{2}{1-|\zeta|^{2}}\|g\|_{\mathcal{B}}
\bigg[\big(\int_{\mathbb{D}}\frac{(1-|z|^{2})^{\alpha}}{|1-z\bar{\zeta}|^{\alpha+2}}dA(z)\big)^{\frac{1}{p}}
+|T_{\mu,\eta-1}(1)'(\zeta)|\bigg]\\
&\lesssim&\|g\|_{\mathcal{B}}+\|g\|_{\mathcal{B}}\|T_{\mu,\eta-1}(1)\|_{\mathcal{LB}^{1}}
\lesssim \|g\|_{\mathcal{B}}.
\end{eqnarray*}
Thus we have
\begin{eqnarray}\label{p:I1}
|I_{1}|\lesssim \|f\|_{L_{a}^{p}(\omega_{\alpha})}\|g\|_{\mathcal{B}}.
\end{eqnarray}

\vskip 0.1in
Next we turn to estimate $I_{2}$. By \cite[Proposition 4.17]{zhu2007}, we obtain
\begin{eqnarray}\label{property-}
L_{a}^{p}(\omega_{\alpha})\subseteq L_{a}^{1}(\omega_{\eta-1})\subseteq L_{a}^{1}(\omega_{\eta}).
\end{eqnarray}
According to \eqref{property-}, \cite[Theorem 4.14]{zhu2007} and Lemma \ref{P:bloch}, we deduce
\begin{align*}
|I_{2}|=&\bigg|\int_{\mathbb{D}}\int_{\mathbb{D}}\frac{f(\zeta)}{(1-z\bar{\zeta})^{\eta+2}}\omega_{\eta}(\zeta)dA(\zeta)
\overline{g(z)}d\mu(z)-\int_{\mathbb{D}}\int_{\mathbb{D}}\frac{f(\zeta)\overline{g(\zeta)}}{(1-z\bar{\zeta})^{\eta+2}}
\omega_{\eta}(\zeta)dA(\zeta)d\mu(z)\bigg|\nonumber\\
\leq&\int_{\mathbb{D}}|f(\zeta)|^{p}|f(\zeta)|^{1-p}\omega_{\eta}(\zeta)
\int_{\mathbb{D}}\frac{|g(z)-g(\zeta)|}{|1-z\bar{\zeta}|^{\eta+2}}d\mu(z)dA(\zeta)\nonumber\\
\leq&\int_{\mathbb{D}}|f(\zeta)|^{p}\frac{\|f\|_{L_{a}^{p}(\omega_{\alpha})}^{1-p}}{(1-|\zeta|^{2})^{\frac{(2+\alpha)(1-p)}{p}}}
\omega_{\eta}(\zeta)\int_{\mathbb{D}}\frac{|g(z)-g(\zeta)|}{|1-z\bar{\zeta}|^{\eta+2}}d\mu(z)dA(\zeta)\nonumber\\
=&\|f\|_{L_{a}^{p}(\omega_{\alpha})}^{1-p}\int_{\mathbb{D}}|f(\zeta)|^{p}(1-|\zeta|^{2})^{\alpha}(1-|\zeta|^{2})
\int_{\mathbb{D}}\frac{|g(z)-g(\zeta)|}{|1-z\bar{\zeta}|^{\eta+2}}d\mu(z)dA(\zeta)\nonumber\\
\lesssim& \|f\|_{L_{a}^{p}(\omega_{\alpha})}\|g\|_{\mathcal{B}}.
\end{align*}
Then the sufficiency part follows from \eqref{bound:I1}, \eqref{bound:I2} and \eqref{p:I1}.

\vskip 0.1in

To show the necessary part, assume that $T_{\mu,\beta}:L_{a}^{p}(\omega_{\alpha})\rightarrow L_{a}^{1}(\omega_{\beta})$ is bounded for $0<p\leq1$. By Proposition \ref{P:carleson}, it remains to show that \[
T_{\mu,\eta-1}(1)\in \mathcal{LB}^{1}.
\]
For $z\in\mathbb{D}$, consider the test functions
\[
h_{z}(\zeta)\triangleq\frac{(1-|z|^{2})}{(1-\zeta\bar{z})^{\eta+2}},~\zeta\in\mathbb{D}.
\]
Note that
$\|h_{z}\|_{L_{a}^{p}(\omega_{\alpha})}\asymp 1$ then
\begin{align}\label{2s5}
|\langle T_{\mu,\beta}(h_{z}),g\rangle_{L^{2}(\omega_{\beta})}|
=(1-|z|^{2})\bigg|\int_{\mathbb{D}}\frac{\overline{g(u)}}{(1-u\bar{z})^{\eta+2}}d\mu(u)\bigg|
\lesssim \|g\|_{\mathcal{B}}.
\end{align}
for any $g\in\mathcal{B}$. Using the following identity,
\begin{align}\label{2s6}
(1-|z|^{2})\overline{g(z)T_{\mu,\eta}(1)(z)}=\langle T_{\mu,\beta}(h_{z}),g\rangle_{L^{2}(\omega_{\beta})}
+(1-|z|^{2})\int_{\mathbb{D}}\frac{\overline{g(z)}-\overline{g(u)}}{(1-u\bar{z})^{\eta+2}}d\mu(u),
\end{align}
Lemma \ref{P:bloch} and (\ref{2s5}), we obtain that for any $g\in\mathcal{B}$,
\begin{eqnarray}\label{2s8}
(1-|z|^{2})|g(z)T_{\mu,\eta}(1)(z)|\lesssim \|g\|_{\mathcal{B}}.
\end{eqnarray}
Take the supremum over all $g\in\mathcal{B}$ with $\|g\|_\mathcal{B}\leq1$ and $g(0)=0$ to the both sides of \eqref{2s8}. According to \cite[Theorem 5.7]{zhu2007}, we have
\begin{eqnarray}\label{2s9}
(1-|z|^{2})\log\frac{2}{1-|z|^{2}}|T_{\mu,\eta}(1)(z)|\lesssim 1.
\end{eqnarray}
Since $\mu$ is a $\frac{1}{p}$-Carleson measure for $L_{a}^{1}(\omega_{\alpha})$ and \cite[Lemma 3.10]{zhu2007}, we have
\begin{align}\label{2s10}
(1-|z|^{2})\log\frac{2}{1-|z|^{2}}|T_{\mu,\eta-1}(1)(z)|\leq&
(1-|z|^{2})\log\frac{2}{1-|z|^{2}}\int_{\mathbb{D}}\frac{1}{|1-u\bar{z}|^{\eta+1}}d\mu(u)\nonumber\\
\lesssim&(1-|z|^{2})\log\frac{2}{1-|z|^{2}}
\big(\int_{\mathbb{D}}\frac{(1-|u|^{2})^{\alpha}}{|1-u\bar{z}|^{\alpha+2}}dA(u)\big)^{\frac{1}{p}}
\nonumber\\
\lesssim&1.
\end{align}
Notice that
\begin{eqnarray}\label{2s11}
(\eta+1)T_{\mu,\eta}(1)(z)=(\eta+1)T_{\mu,\eta-1}(1)(z)
+z(T_{\mu,\eta-1}(1))'(z).
\end{eqnarray}
Combining (\ref{2s9}), (\ref{2s10}) with (\ref{2s11}), we obtain $T_{\mu,\eta-1}(1)\in \mathcal{LB}^{1}$, and complete proofs of the necessity part.
\end{proof}

\vskip 0.1in
In the following, we give the proof of Theorem \ref{T:standard:two}.
\begin{proof}[Proof of Theorem \ref{T:standard:two}]
Necessity. Let $c\in(\frac{1}{p},\infty)$, $-1<\beta\leq\alpha<\infty$ and $t=\frac{\beta+2}{\alpha+2}$. For $z\in\mathbb{D}$, take testing functions on $\mathbb{D}$
\[
f_{z}(\zeta)=\frac{(1-|z|^{2})^{(\alpha+2)(c-\frac{1}{p})}}{(1-\bar{z}\zeta)^{\alpha c+2c}} \text{ and }
g_{z}(\zeta)=\frac{(1-|z|^{2})^{(\alpha+2)(c-\frac{t}{q'})}}{(1-\bar{z}\zeta)^{\alpha c+2c}}.
\]
By \cite[Lemma 3.10]{zhu2007} and direction calculations,
\[\|f_{z}\|_{L_{a}^{p}(\omega_{\alpha})}\asymp1
\]
and
\[
\|g_{z}\|_{L_{a}^{q'}(\omega_{\beta})}\asymp1.
\]
It follows from $\textmd{H}\ddot{\textmd{o}}\textmd{lder's}$ inequality and the boundedness of $T_{\mu,\beta}$ that
\[
\langle T_{\mu,\beta}(f_z),g_z\rangle_{L^{2}(\omega_{\beta})}\leq \|T_{\mu,\beta}(f_z)\|_{L^{q}(\omega_{\beta})}\|g_z\|_{L^{q'}(\omega_{\beta})}\leq \|T_{\mu,\beta}\|_{L^p(\omega_{\alpha})\to L^q(\omega_{\beta})}\|f_z\|_{L^p(\omega_{\alpha})}\|g_z\|_{L^{q'}(\omega_{\beta})}.
\]
Take $r\in(0,1)$. According to Fubini's Theorem, we deduce
\begin{align}\label{ff_z}
\langle T_{\mu,\beta}(f_z),g_z\rangle_{L^{2}(\omega_{\beta})}
&=\int_{\mathbb{D}}f_{z}(\zeta)\overline{g_{z}(\zeta)}d\mu(\zeta)\nonumber\\
&=(1-|z|^{2})^{(\alpha+2)(2c-\frac{1}{p}-\frac{t}{q'})}
\int_{\mathbb{D}}\frac{1}{|1-\bar{z}\zeta|^{2\alpha c+4c}}d\mu(\zeta)\nonumber\\
&\geq (1-|z|^{2})^{(\alpha+2)(2c-\frac{1}{p}-\frac{t}{q'})}
\int_{D(z,r)}\frac{1}{|1-\bar{z}\zeta|^{2\alpha c+4c}}d\mu(\zeta)\nonumber\\
&\asymp\frac{\mu(D(z,r))}{(1-|z|^{2})^{(\alpha+2)(\frac{1}{p}+\frac{t}{q'})}}.
\end{align}
Hence,
\[
\sup_{z\in\mathbb{D}}\frac{\mu(D(z,r))}{(1-|z|^{2})^{(\alpha+2)(\frac{1}{p}+\frac{t}{q'})}}<\infty,
\]
which yields that $\mu$ is a $(\frac{1}{p}+\frac{t}{q'})$-Carleson measure for $L_{a}^{1}(\omega_{\alpha})$.

Sufficiency. Notes that $\beta\leq\alpha$, From \cite[Proposition 4.17]{zhu2007}, for any
$h\in L_{a}^{q'}(\omega_{\beta})$, we have
\begin{align}\label{prop-}
\int_{\mathbb{D}}|h(z)|^{\frac{q'}{t}}\omega_{\alpha}(z)dA(z)\lesssim
\big(\int_{\mathbb{D}}|h(z)|^{q'}\omega_{\beta}(z)dA(z)\big)^{\frac{1}{t}}.
\end{align}
Assume that $\mu$ is a $(\frac{1}{p}+\frac{t}{q'})$-Carleson measure for $L_{a}^{1}(\omega_{\alpha})$.
Together with Fubini's Theorem, $\textmd{H}\ddot{\textmd{o}}\textmd{lder's}$ inequality and \eqref{prop-}, for any polynomials $f$ and $g$,
\begin{align}\label{density}
|\langle T_{\mu,\beta}(f),g\rangle_{L^{2}(\omega_{\beta})}|&=|\langle f,g\rangle_{L^{2}(\mu)}|\nonumber\\
&\leq\int_{\mathbb{D}}|f(z)g(z)|d\mu(z)\nonumber\\
&\lesssim\bigg(\int_{\mathbb{D}}|f(z)g(z)|^{\frac{pq'}{q'+pt}}\omega_{\alpha}(z)dA(z)\bigg)^{\frac{q'+pt}{pq'}}\nonumber\\
&\leq \|f\|_{L_{a}^{p}(\omega_{\alpha})}\bigg(\int_{\mathbb{D}}|g(z)|^{\frac{q'}{t}}\omega_{\alpha}(z)dA(z)\bigg)^{\frac{t}{q'}}\nonumber\\
&\lesssim\|f\|_{L_{a}^{p}(\omega_{\alpha})}\|g\|_{L_{a}^{q'}(\omega_{\beta})}.
\end{align}
Therefore, $T_{\mu,\beta}:L_{a}^{p}(\omega_{\alpha})\rightarrow L_{a}^{q}(\omega_{\beta})$ is bounded provided $0<p\leq1<q<\infty$ and $-1<\beta\leq\alpha<\infty$.
\end{proof}

\section{Compactness of Toeplitz operators}

In this section, we proceed to give descriptions for the compactness of Toeplitz operators $T_{\mu,\beta}$ between  $L_{a}^{p}(\omega_{\alpha})$ and $L_{a}^{q}(\omega_{\beta})$, and prove Theorem \ref{T:compact1} and Theorem \ref{T:compact standard}.

\vskip 0.1in

Recall for $1\leq p,q<\infty$, the linear operator $T_{\mu,\beta}:L_a^p(\omega_\alpha)\to L^q_a(\omega_\beta)$ is compact if for any bounded sequence $\{f_{n}\}_{n=1}^{\infty}$ in $L_{a}^{p}(\omega_{\alpha})$, there exists a subsequence of $\{T_{\mu,\beta}(f_{n})\}_{n=1}^{\infty}$ that converges in $L_{a}^{q}(\omega_{\beta})$. Note that $L_{a}^{p}(\omega_{\alpha})$ is not a Banach space for $p\in(0,1)$. When $T_{\mu,\beta}$ acts on the metric space $L_a^p(\omega_\alpha)$ for $p\in(0,1)$, we call $T_{\mu,\beta}$ is compact if
\[
\lim\limits_{n\rightarrow\infty}\|T_{\mu,\beta}(f_{n})\|_{L_{a}^{q}(\omega_{\beta})}=0,
\]
where $\{f_{n}\}_{n=1}^{\infty}$ is a bounded sequence in $L_{a}^{p}(\omega_{\alpha})$ such that $\{f_{n}\}_{n=1}^{\infty}$ uniformly converges to $0$ on any compact subsets of $\mathbb{D}$.

\vskip 0.1in

For the sake of completeness, we state and include a proof of the following known fact, which shows that definitions of the compactness for $1\leq p<\infty$ and $0<p<1$ are consistent.

\begin{proposition}\label{compact claim}
Let $-1<\alpha,\beta<\infty$. Then
$
T_{\mu,\beta}:L_{a}^{1}(\omega_{\alpha})\rightarrow L_{a}^{1}(\omega_{\beta})
$
is compact if and only if
\[
\lim\limits_{j\rightarrow\infty}\|T_{\mu,\beta}(f_{j})\|_{L_{a}^{1}(\omega_{\beta})}=0,
\]
where $\{f_{j}\}_{j=1}^{\infty}$ is a bounded sequence in $L_{a}^{1}(\omega_{\alpha})$ such that $\{f_{j}\}_{j=1}^{\infty}$ uniformly converges to $0$ on any compact subsets of $\mathbb{D}$.
\end{proposition}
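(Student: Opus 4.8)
The plan is to prove both implications directly from the definitions. One direction is essentially immediate: if $T_{\mu,\beta}:L_a^1(\omega_\alpha)\to L_a^1(\omega_\beta)$ is compact in the Banach-space sense, and $\{f_j\}$ is a bounded sequence converging to $0$ uniformly on compact subsets of $\mathbb{D}$, then I claim $\|T_{\mu,\beta}(f_j)\|_{L^1(\omega_\beta)}\to 0$. Suppose not; then there is $\varepsilon>0$ and a subsequence along which $\|T_{\mu,\beta}(f_{j_k})\|_{L^1(\omega_\beta)}\ge\varepsilon$. By compactness, pass to a further subsequence so that $T_{\mu,\beta}(f_{j_k})\to h$ in $L_a^1(\omega_\beta)$. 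On the other hand, for each fixed $z\in\mathbb{D}$, the functional $f\mapsto T_{\mu,\beta}(f)(z)=\int_{\mathbb{D}}\frac{f(w)}{(1-z\bar w)^{2+\beta}}\,d\mu(w)$ is bounded on $L_a^1(\omega_\alpha)$ (the kernel is bounded on $\mathbb{D}$ for fixed $z$, and point evaluation on a compact set controls the integral against the finite measure $\mu$ — more carefully, one uses that $L^1(\omega_\alpha)$-bounded families are locally uniformly bounded), so $T_{\mu,\beta}(f_{j_k})(z)\to 0$ pointwise. Since $L^1(\omega_\beta)$-convergence implies convergence in measure, hence a.e. along a subsequence, we get $h\equiv 0$, contradicting $\|h\|_{L^1(\omega_\beta)}\ge\varepsilon$.

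For the converse, assume the sequential condition on sequences converging to $0$ locally uniformly, and let $\{f_j\}$ be an arbitrary bounded sequence in $L_a^1(\omega_\alpha)$; I must extract a subsequence of $\{T_{\mu,\beta}(f_j)\}$ converging in $L_a^1(\omega_\beta)$. The standard move is a normal-families argument: an $L^1(\omega_\alpha)$-bounded family of analytic functions is locally uniformly bounded on $\mathbb{D}$ (via the subharmonicity/mean-value estimate $|f(z)|\lesssim (1-|z|)^{-(2+\alpha)}\|f\|_{L^1(\omega_\alpha)}$ on a fixed compact set), so by Montel's theorem there is a subsequence $\{f_{j_k}\}$ converging uniformly on compact subsets of $\mathbb{D}$ to some analytic $f$. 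Then $f\in L_a^1(\omega_\alpha)$ with $\|f\|_{L^1(\omega_\alpha)}\le\liminf\|f_{j_k}\|_{L^1(\omega_\alpha)}$ by Fatou, so $g_k:=f_{j_k}-f$ is a bounded sequence in $L_a^1(\omega_\alpha)$ converging to $0$ locally uniformly. By hypothesis $\|T_{\mu,\beta}(g_k)\|_{L^1(\omega_\beta)}\to 0$, i.e. $T_{\mu,\beta}(f_{j_k})\to T_{\mu,\beta}(f)$ in $L_a^1(\omega_\beta)$, which is exactly the desired convergent subsequence. (Implicit here is that $T_{\mu,\beta}$ is bounded $L_a^1(\omega_\alpha)\to L_a^1(\omega_\beta)$ so that $T_{\mu,\beta}(f)$ makes sense and lies in the target; if one prefers not to assume boundedness a priori, note the sequential condition applied to shifted sequences forces it, or simply restrict to the case where the operator is bounded, which is the only case of interest.)

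The main obstacle — really the only point requiring care — is justifying that a bounded set in $L_a^1(\omega_\alpha)$ is a normal family and that the relevant functionals ($T_{\mu,\beta}(\cdot)(z)$ for fixed $z$, and point evaluations) are continuous on $L_a^1(\omega_\alpha)$. Both rest on the subharmonic mean-value inequality: for $f$ analytic and $z$ in a compact set $K\subset\mathbb{D}$, choosing a disk $B(z,r)$ with $\overline{B(z,r)}\subset\mathbb{D}$ of radius comparable to $\mathrm{dist}(K,\partial\mathbb{D})$, one has $|f(z)|\le \frac{1}{\pi r^2}\int_{B(z,r)}|f|\,dA \lesssim_K \|f\|_{L^1(\omega_\alpha)}$, since $\omega_\alpha$ is bounded below by a positive constant on $\overline{B(z,r)}$. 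This gives local uniform boundedness (hence, with Montel, normality) and also shows $|T_{\mu,\beta}(f)(z)|\le \sup_{w\in\mathbb{D}}|1-z\bar w|^{-(2+\beta)}\,\mu(\mathbb{D})\cdot\sup_{\mathbb{D}}|f|$ is finite, but more usefully that on compact subsets these evaluations are controlled by $\|f\|_{L^1(\omega_\alpha)}$, so that locally-uniform limits interchange with $T_{\mu,\beta}$. With these standard estimates in hand, both implications are short, and the proof reduces to the two diagonal/Montel arguments sketched above.
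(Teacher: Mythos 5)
Your second implication (the sequential condition implies compactness) is correct and is essentially identical to the paper's argument: Montel plus the subharmonic mean-value estimate to extract a locally uniformly convergent subsequence, Fatou to place the limit in $L_a^1(\omega_\alpha)$, and then the hypothesis applied to $f_{j_k}-f$. No issue there.

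The gap is in the first implication, at the claim that $T_{\mu,\beta}(f_{j_k})(z)\to 0$ for each fixed $z$. Your justification — the kernel $(1-z\bar w)^{-(2+\beta)}$ is bounded in $w$ for fixed $z$, $\mu$ is finite, and bounded sets in $L_a^1(\omega_\alpha)$ are locally uniformly bounded — does not suffice, because the integral $\int_{\mathbb{D}}\frac{f_{j}(w)}{(1-z\bar w)^{2+\beta}}\,d\mu(w)$ runs over all of $\mathbb{D}$, while $f_j\to 0$ only uniformly on compact subsets; near $\partial\mathbb{D}$ you only have $|f_j(w)|\lesssim(1-|w|)^{-(2+\alpha)}$, and $\sup_{\mathbb{D}}|f_j|$ is in general infinite, so the bound $\mu(\mathbb{D})\cdot\sup_{\mathbb{D}}|f_j|$ you write down is vacuous. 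What is actually needed is that $\int_{\mathbb{D}\setminus r\mathbb{D}}|f_j|\,d\mu$ is small uniformly in $j$ for $r$ close to $1$, i.e.\ that $\mu$ is a \emph{vanishing} Carleson measure for $L_a^1(\omega_\alpha)$; the plain Carleson property (which follows from boundedness) is not enough — e.g.\ for $\mu=\sum_k(1-|z_k|)^{2+\alpha}\delta_{z_k}$ with $z_k\to\partial\mathbb{D}$ and normalized kernels $f_k$ peaking at $z_k$, one has $f_k\to0$ locally uniformly yet $\int|f_k|\,d\mu\not\to0$. Deriving the vanishing property from compactness is the substantive part of the paper's proof: it passes to the induced operator $S_{\mu,\alpha}$ on the Bloch space via the duality $(L_a^1(\omega_\beta))^*\cong\mathcal{B}$, shows $\|S_{\mu,\alpha}(g_z)\|_{\mathcal{B}}\to0$ for the standard test functions, deduces $\mu(D(z,r))/(1-|z|^2)^{\alpha+2}\to0$, and only then controls the tail of $T_{\mu,\beta}(f_j)(\zeta)$ by a lattice decomposition. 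Your proof omits this entirely, so as written the forward direction does not go through.
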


\begin{proof}[Proof of Proposition \ref{compact claim}] Sufficiency. Let $\{f_{l}\}_{l=1}^{\infty}$ be any bounded sequence in $L_{a}^{1}(\omega_{\alpha})$. It follows from \cite[Theorem 4.14]{zhu2007} that $\{f_{l}\}_{l=1}^{\infty}$ is uniformly bounded on any compact subsets of $\mathbb{D}$. From Montel's Theorem, there exist a subsequence $\{f_{l_{m}}\}_{m=1}^{\infty}$ of $\{f_{l}\}_{l=1}^{\infty}$ and an analytic function $f$ such that $\{f_{l_{m}}\}_{m=1}^{\infty}$ uniformly converges to $f$ on any compact subsets of $\mathbb{D}$ as $m\rightarrow\infty$. Moreover, $f\in L_{a}^{1}(\omega_{\alpha})$. Since the sequence $\{f_{l_{m}}-f\}_{m=1}^{\infty}$ uniformly converges to $0$ on any compact subsets of $\mathbb{D}$, together with the hypothesis, we obtain $\lim\limits_{m\rightarrow\infty}\|T_{\mu,\beta}(f_{l_{m}}-f)\|_{L_{a}^{1}(\omega_{\beta})}=0$, which yields that $T_{\mu,\beta}:L_{a}^{1}(\omega_{\alpha})\rightarrow L_{a}^{1}(\omega_{\beta})$ is compact.

\vskip 0.1in

We divide the proof of necessity into two steps. Assume that
$T_{\mu,\beta}:L_{a}^{1}(\omega_{\alpha})\rightarrow L_{a}^{1}(\omega_{\beta})$ is compact.

\textbf{Step 1.} We will show that $\mu$ is a vanishing Carleson measure for $L_{a}^{1}(\omega_{\alpha})$. Denote by $S_{\mu,\alpha}$ the integral operator acting on $\mathcal{B}$, for any $g\in\mathcal{B}$,
\[
S_{\mu,\alpha}(g)(z):=\int_{\mathbb{D}}\frac{g(\zeta)}{(1-z\bar{\zeta})^{\alpha+2}}d\mu(\zeta),~ z\in\mathbb{D}.
\]
From Lemma \ref{dual bloch}, for any $f\in L_{a}^{1}(\omega_{\alpha})$ and $g\in\mathcal{B}$, we have
\begin{align}\label{compact-}
\langle f,S_{\mu,\alpha}(g)\rangle_{L^{2}(\omega_{\alpha})}=
\langle T_{\mu,\beta}(f),g\rangle_{L^{2}(\omega_{\beta})}=\langle f,T_{\mu,\beta}^{*}(g)\rangle_{L^{2}(\omega_{\alpha})}.
\end{align}
It follows from \eqref{compact-} and the compactness of $T_{\mu,\beta}$ that $S_{\mu,\alpha}$ is compact on $\mathcal{B}$.
Choose $c\in\mathbb{R}$ such that $c>1$. For $z\in\mathbb{D}$, take testing functions
\[
f_{z}(\zeta)=\frac{(1-|z|^{2})^{(\alpha+2)(c-1)}}{(1-\bar{z}\zeta)^{\alpha c+2c}} \text{ and }
g_{z}(\zeta)=\frac{(1-|z|^{2})^{\alpha c+2c}}{(1-\bar{z}\zeta)^{\alpha c+2c}},~\zeta\in\mathbb{D}.
\]
According to \cite[Lemma 3.10]{zhu2007} and the fact that $\mu$ is a Carleson measure for $L_a^1(\omega_{\alpha})$ which is due to Proposition \ref{P:carleson},
for each $\zeta\in \mathbb{D}$, we get
\begin{align*}
|S_{\mu,\alpha}(g_z)(\zeta)|&
\leq\frac{(1-|z|^{2})^{\alpha c+2c}}{(1-|\zeta|)^{\alpha+2}}\int_{\mathbb{D}}\frac{1}{|1-\bar{z}u|^{\alpha c+2c}}d\mu(u)\\
&\lesssim\frac{(1-|z|^{2})^{\alpha c+2c}}{(1-|\zeta|)^{\alpha+2}}
\int_{\mathbb{D}}\frac{(1-|u|^{2})^{\alpha}}{|1-\bar{z}u|^{\alpha c+2c}}dA(u) \\
&\asymp\big(\frac{1-|z|}{1-|\zeta|}\big)^{\alpha+2},
\end{align*}
which yields that
\begin{align}\label{CCP1}
\lim_{|z|\to 1^-}S_{\mu,\alpha}(g_z)(\zeta)=0.
\end{align}
Note that $\{g_z\}$ is a bounded sequence in $\mathcal{B}$. Then there exists a subsequence $\{g_{z_k}\}_{k=1}^{\infty}$ such that
$\lim\limits_{k\to+\infty}\|S_{\mu,\alpha}(g_{z_k})-g\|_{\mathcal{B}}=0$ by the compactness of $S_{\mu,\alpha}$.
Since norm convergence implies pointwise convergence in $\mathcal{B}$, together with \eqref{CCP1}, we know $g=0$, and hence,
$\lim\limits_{k\to+\infty}\|S_{\mu,\alpha}(g_{z_k})\|_{\mathcal{B}}=0$.

Next we claim that
\begin{align}\label{supplement}
\lim\limits_{|z|\to 1^{-}}\|S_{\mu,\alpha}(g_{z})\|_{\mathcal{B}}=0.
\end{align}
If otherwise, then there exists a subsequence $\{g_{z_n}\}_{n=1}^{\infty}$ such that
\[
\lim\limits_{n\to+\infty}\|S_{\mu,\alpha}(g_{z_n})\|_{\mathcal{B}}=\delta_0>0.
\]
Apply the initial arguments to $\{g_{z_n}\}_{n=1}^{\infty}$, we can find a subsequence $\{g_{z_{n_k}}\}_{k=1}^{\infty}$ such that
$\lim\limits_{k\to+\infty}\|S_{\mu,\alpha}(g_{z_{n_k}})\|_{\mathcal{B}}=0$, which is a contradiction.

Take $r\in(0,1)$. Using a similar argument as in the proof of Proposition \ref{P:carleson}, for any $z\in\mathbb{D}$,
\[
\frac{\mu(D(z,r))}{(1-|z|^{2})^{\alpha+2}}\lesssim \langle T_{\mu,\beta}(f_{z}),g_{z}\rangle_{L^{2}(\omega_{\beta})}=
\langle f_{z},S_{\mu,\alpha}(g_{z})\rangle_{L^{2}(\omega_{\alpha})}
\leq \|f_{z}\|_{L_{a}^{1}(\omega_{\alpha})}\|S_{\mu,\alpha}(g_{z})\|_{\mathcal{B}}.
\]
Combining with \eqref{supplement}, we deduce
\[
\lim_{|z|\rightarrow1^{-}}\frac{\mu(D(z,r))}{(1-|z|^{2})^{\alpha+2}}=0,
\]
which finishes this step.

\textbf{Step 2.} Let $\{f_{j}\}_{j=1}^{\infty}$ be any bounded sequence in $L_{a}^{1}(\omega_{\alpha})$ such that $\{f_{j}\}_{j=1}^{\infty}$ uniformly converges to $0$ on any compact subsets of $\mathbb{D}$. For $\delta\in(0,1)$, consider a $\delta$-lattice $\{\xi_{i}\}_{i=1}^{\infty}$ and rearrange the sequence $\{\xi_{i}\}_{i=1}^{\infty}$ such that $|\xi_{1}|\leq|\xi_{2}|\leq\cdots\rightarrow1^{-}$. From Step 1, for any given $\varepsilon>0$, there exists $k_{0}\in\mathbb{N}$ such that for any $i>k_{0}$, it holds that
\[
\frac{\mu(D(\xi_{i},5\delta))}{(1-|\xi_{i}|^{2})^{\alpha+2}}<\varepsilon.
\]
Observe that there exists $r_{0}\in(0,1)$ such that
$\mathbb{D}\backslash r_{0}\mathbb{D}\subseteq\bigcup_{i=k_{0}+1}^{\infty}D(\xi_{i},5\delta)$. Moreover, there also exists $k'\in\mathbb{N}$ such that for any $j\geq k'$, $|f_{j}|<\varepsilon$ on $D(0,r_{0})$. Together with the property of subharmonicity, for $\zeta\in\mathbb{D}$ and $j\geq k'$, we obtain
\begin{align*}
|T_{\mu,\beta}(f_{j})(\zeta)|
&\leq\int_{D(0,r_{0})}\frac{|f_{j}(u)|}{|1-\zeta\bar{u}|^{\beta+2}}d\mu(u)+
\int_{\mathbb{D}\backslash D(0,r_{0})}\frac{|f_{j}(u)|}{|1-\zeta\bar{u}|^{\beta+2}}d\mu(u)\\
&<\frac{\varepsilon\mu(D(0,r_{0}))}{(1-|\zeta|)^{\beta+2}}+\frac{1}{(1-|\zeta|)^{\beta+2}}
\sum_{i=k_{0}+1}^{\infty}\int_{D(\xi_{i},5\delta)}|f_{j}(u)|d\mu(u)\\
&<\frac{\varepsilon\mu(D(0,r_{0}))}{(1-|\zeta|)^{\beta+2}}+\frac{1}{(1-|\zeta|)^{\beta+2}}
\sum_{i=k_{0}+1}^{\infty}\mu(D(\xi_{i},5\delta))\sup_{u\in D(\xi_{i},5\delta)}|f_{j}(u)|\\
&\lesssim\frac{\varepsilon\mu(D(0,r_{0}))}{(1-|\zeta|)^{\beta+2}}+\frac{1}{(1-|\zeta|)^{\beta+2}}\sum_{i=k_{0}+1}^{\infty}
\frac{\mu(D(\xi_{i},5\delta))}{(1-|\xi_{i}|^{2})^{\alpha+2}}\int_{D(\xi_{i},6\delta)}|f_{j}(u)|\omega_{\alpha}(u)dA(u)\\
&\lesssim\frac{\varepsilon\mu(D(0,r_{0}))}{(1-|\zeta|)^{\beta+2}}
+\frac{\varepsilon\|f_{j}\|_{L_{a}^{1}(\omega_{\alpha})}}{(1-|\zeta|)^{\beta+2}}.
\end{align*}
Thus, for $\zeta\in\mathbb{D}$, we get
\begin{align}\label{3}
\lim_{j\to \infty}T_{\mu,\beta}(f_j)(\zeta)=0.
\end{align}
There exists a subsequence $\{f_{j_k}\}_{k=1}^{\infty}$ of $\{f_j\}_{j=1}^{\infty}$ such that
\[
\lim\limits_{k\to+\infty}\|T_{\mu,\beta}(f_{j_k})-f\|_{L_{a}^{1}(\omega_{\beta})}=0
\]
by the compactness of $T_{\mu,\beta}$.
Then, up to passing to a subsequence if necessary,
\[
\lim\limits_{k\to+\infty}T_{\mu,\beta}(f_{j_k})(\zeta)=f(\zeta),~\zeta\in\mathbb{D},~\textmd{a.e.}
\]
Together with \eqref{3}, we know $\|f\|_{L_{a}^{1}(\omega_{\beta})}=0$. Hence, we have
\[
\lim\limits_{k\to+\infty}\|T_{\mu,\beta}(f_{j_k})\|_{L_{a}^{1}(\omega_{\beta})}=0.
\]
Thus, we conclude that $\lim\limits_{j\to \infty}\|T_{\mu,\beta}(f_{j})\|_{L_{a}^{1}(\omega_{\beta})}=0$ by contradiction as a similar argument with \eqref{supplement}. This completes the proof.
\end{proof}

Next we give a necessary condition for the compactness of
$T_{\mu,\beta}:L_{a}^{p}(\omega_{\alpha})\rightarrow L_{a}^{1}(\omega_{\beta})$ when $0<p\leq 1$ and $-1<\alpha,\beta<\infty$.

\begin{proposition}\label{compact-P:carleson1} Let $\mu$ be a positive Borel measure, $0<p\leq 1$ and $-1<\alpha,\beta<\infty$. If
$T_{\mu,\beta}:L_{a}^{p}(\omega_{\alpha})\rightarrow L_{a}^{1}(\omega_{\beta})$ is compact, then $\mu$ is a vanishing $\frac{1}{p}$-Carleson measure for $L_a^1(\omega_\alpha)$.
\end{proposition}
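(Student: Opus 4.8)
The plan is to recycle the test functions from the proof of Proposition~\ref{P:carleson} and to combine the lower bound obtained there with the compactness hypothesis. First note that a compact operator is bounded, so by Proposition~\ref{P:carleson} the measure $\mu$ is already a $\tfrac1p$-Carleson measure for $L^1_a(\omega_\alpha)$; it remains to upgrade this to the vanishing version, i.e.\ to prove
\[
\lim_{|z|\to1^-}\frac{\mu(D(z,r))}{(1-|z|^2)^{\frac{\alpha+2}{p}}}=0
\]
for a fixed $r\in(0,1)$. To this end, fix $c\in(\tfrac1p,\infty)$ and, for $z\in\mathbb D$, set
\[
f_z(\zeta)=\frac{(1-|z|^2)^{(\alpha+2)(c-\frac1p)}}{(1-\bar z\zeta)^{\alpha c+2c}},\qquad
g_z(\zeta)=\frac{(1-|z|^2)^{\alpha c+2c}}{(1-\bar z\zeta)^{\alpha c+2c}},\qquad \zeta\in\mathbb D,
\]
so that $\|f_z\|_{L^p(\omega_\alpha)}\asymp1$ and $\|g_z\|_{\mathcal B}\asymp1$. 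The computation~\eqref{f_z} already gives
\[
\frac{\mu(D(z,r))}{(1-|z|^2)^{\frac{\alpha+2}{p}}}\;\lesssim\;\langle T_{\mu,\beta}(f_z),g_z\rangle_{L^2(\omega_\beta)},
\]
while Lemma~\ref{dual bloch} (the $L^1$–$\mathcal B$ duality pairing) yields
\[
\langle T_{\mu,\beta}(f_z),g_z\rangle_{L^2(\omega_\beta)}\;\le\;\|T_{\mu,\beta}(f_z)\|_{L^1(\omega_\beta)}\,\|g_z\|_{\mathcal B}\;\lesssim\;\|T_{\mu,\beta}(f_z)\|_{L^1(\omega_\beta)}.
\]

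Next I would observe that, since $c>\tfrac1p$, the exponent $(\alpha+2)(c-\tfrac1p)$ is strictly positive, so for every compact $K\subset\mathbb D$ one has $\sup_{\zeta\in K}|f_z(\zeta)|\lesssim_K(1-|z|^2)^{(\alpha+2)(c-\frac1p)}\to0$ as $|z|\to1^-$; that is, $f_z\to0$ uniformly on compact subsets of $\mathbb D$ while $\{f_z\}$ remains bounded in $L^p_a(\omega_\alpha)$. Feeding this family into the compactness of $T_{\mu,\beta}$ — directly from the definition when $0<p<1$, and via Proposition~\ref{compact claim} (with $q=1$) when $p=1$ — forces $\lim_{|z|\to1^-}\|T_{\mu,\beta}(f_z)\|_{L^1(\omega_\beta)}=0$. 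As the definition of compactness is stated in terms of sequences, this continuous limit is extracted by the standard contradiction argument: were it false, there would exist $z_n$ with $|z_n|\to1$ and $\|T_{\mu,\beta}(f_{z_n})\|_{L^1(\omega_\beta)}$ bounded below, contradicting the fact that $\{f_{z_n}\}$ is a bounded sequence in $L^p_a(\omega_\alpha)$ tending to $0$ uniformly on compacta.

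Combining the two displays then gives $\lim_{|z|\to1^-}\mu(D(z,r))/(1-|z|^2)^{\frac{\alpha+2}{p}}=0$, which is precisely the statement that $\mu$ is a vanishing $\tfrac1p$-Carleson measure for $L^1_a(\omega_\alpha)$. I expect the only mildly delicate point to be the bookkeeping in passing from the sequential definition of compactness to a genuine limit as $|z|\to1^-$, together with handling the $0<p<1$ and $p=1$ cases uniformly — both of which become routine once Proposition~\ref{compact claim} is invoked.
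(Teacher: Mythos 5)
Your proposal is correct and follows essentially the same route as the paper: the same test functions $f_z$, $g_z$, the lower bound \eqref{f_z}, the duality estimate against $\|g_z\|_{\mathcal B}$, and the observation that $f_z\to0$ uniformly on compacta so that compactness forces $\|T_{\mu,\beta}(f_z)\|_{L^1(\omega_\beta)}\to0$. Your extra care in invoking Proposition~\ref{compact claim} for $p=1$ and in extracting the continuous limit from the sequential definition only makes explicit what the paper leaves implicit.
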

\begin{proof}
For $z\in\mathbb{D}$, take the constant $c$ and the same testing functions
$f_{z}$ and $g_{z}$ as ones in the proof of Proposition \ref{P:carleson}.
Observe that $\|f_{z}\|_{L_{a}^{p}(\omega_{\alpha})}\asymp1$ and $\{f_{z}\}$ uniformly converges to $0$ on any compact subsets of $\mathbb{D}$ as $|z|\rightarrow1^{-}$. From the compactness of
$T_{\mu,\beta}:L_{a}^{p}(\omega_{\alpha})\rightarrow L_{a}^{1}(\omega_{\beta})$, we deduce
\[
\lim\limits_{|z|\rightarrow1^{-}}\|T_{\mu,\beta}(f_{z})\|_{L_{a}^{1}(\omega_{\beta})}=0.
\]
Together with \eqref{f_z}, for $r\in(0,1)$, we have
\[
\frac{\mu(D(z,r))}{(1-|z|^{2})^{\frac{\alpha+2}{p}}}\lesssim\langle T_{\mu,\beta}(f_z),g_z\rangle_{L^{2}(\omega_{\beta})}\leq
\|T_{\mu,\beta}(f_{z})\|_{L^1(\omega_{\beta})}\|g_{z}\|_{\mathcal{B}}\rightarrow0,~~\textmd{as}~|z|\rightarrow1^{-},
\]
which yields that
\[
\lim_{|z|\rightarrow1^{-}}\frac{\mu(D(z,r))}{(1-|z|^{2})^{\frac{\alpha+2}{p}}}=0.
\]
Hence, we know that $\mu$ is a vanishing $\frac{1}{p}$-Carleson measure for $L_{a}^{1}(\omega_{\alpha})$.
\end{proof}

We are now in a position to give the proof of Theorem \ref{T:compact1}.
\begin{proof}[Proof of Theorem \ref{T:compact1}]
Let
$
\eta=\frac{2+\alpha-p}{p}
$
be the same constant as the one in the proof of Theorem \ref{P:standard2}.
Let $\{f_{j}\}_{j=1}^{\infty}$ be a sequence in $L_{a}^{p}(\omega_{\alpha})$ such that $\|f_{j}\|_{L_{a}^{p}(\omega_{\alpha})}\leq1$ and $\{f_{j}\}_{j=1}^{\infty}$ uniformly converges to $0$ on any compact subsets of $\mathbb{D}$. Then it is sufficient to show that
\[
\lim\limits_{j\rightarrow\infty}\|T_{\mu,\beta}(f_{j})\|_{L_{a}^{1}(\omega_{\beta})}=0.
\]
By Lemma \ref{dual bloch}, for any $g\in\mathcal{B}$, we have
\begin{eqnarray}\label{c:I1 and I2}
\langle T_{\mu,\beta}(f_{j}), g\rangle_{L^{2}(\omega_{\beta})}&=&\int_{\mathbb{D}}f_{j}(z)\overline{g(z)}d\mu(z)\nonumber\\
&=&\int_{\mathbb{D}}P_{\omega_{\eta}}(f_{j}\bar{g})(z)d\mu(z)+
\bigg(\int_{\mathbb{D}}(f_{j}\bar{g})(z)d\mu(z)-\int_{\mathbb{D}}P_{\omega_{\eta}}(f_{j}\bar{g})(z)d\mu(z)\bigg),
\end{eqnarray}
where $P_{\omega_{\eta}}$ is the orthogonal projection from $L^{2}(\omega_{\eta})$ to $L_{a}^{2}(\omega_{\eta})$.
Let
\begin{eqnarray}\label{c:I1}
I_{1,j}=\int_{\mathbb{D}}P_{\omega_{\eta}}(f_{j}\bar{g})(z)d\mu(z)
\end{eqnarray}
and
\begin{eqnarray}\label{c:I2}
I_{2,j}=\int_{\mathbb{D}}(f_{j}\bar{g})(z)d\mu(z)-\int_{\mathbb{D}}P_{\omega_{\eta}}(f_{j}\bar{g})(z)d\mu(z).
\end{eqnarray}
Here we give estimations for $I_{1,j}$ and $I_{2,j}$ respectively. For simplicity, part of the following proof is omitted, which can be referred to the corresponding arguments in the proof of Theorem \ref{P:standard2}. Using the condition that $\mu$ is a vanishing $\frac{1}{p}$-Carleson measure for $L^{1}_{a}(\omega_{\alpha})$ and $T_{\mu,\eta-1}(1)\in\mathcal{LB}_{0}^1$, together with
\cite[Lemma 3.10]{zhu2007}, for any given $\varepsilon>0$, there exists $\lambda\in(0,1)$ such that for any $\zeta\in\mathbb{D}$ and $\lambda\leq|\zeta|<1$, it holds that
\begin{eqnarray}\label{1c:I2}
(1-|\zeta|^{2})\log\frac{2}{1-|\zeta|^{2}}\int_{\mathbb{D}}\frac{1}{|1-\bar{z}\zeta|^{\eta+1}}d\mu(z)<\varepsilon
\end{eqnarray}
and
\begin{eqnarray}\label{2c:I2}
(1-|\zeta|^{2})\log\frac{2}{1-|\zeta|^{2}}|T_{\mu,\eta-1}(1)^{'}(\zeta)|<\varepsilon.
\end{eqnarray}
Moreover, there also exists $k_{0}\in\mathbb{N}$ such that for any $j>k_{0}$, $|f_{j}|^{p}<\varepsilon$ on $\lambda\mathbb{D}$.
Combining \eqref{1c:I2}, \eqref{2c:I2}, \cite[Theorem 4.14]{zhu2007} with \cite[Lemma 2.1]{WL-2010}, for any $j>k_{0}$, we obtain
\begin{align*}
|I_{1,j}|\leq&\|f_{j}\|_{L_{a}^{p}(\omega_{\alpha})}^{1-p}\int_{\mathbb{D}}|f_{j}(\zeta)|^{p}\omega_{\alpha+1}(\zeta)
|g(\zeta)|\big|\int_{\mathbb{D}}\frac{1}{(1-\bar{z}\zeta)^{\eta+2}}d\mu(z)\big|dA(\zeta)\nonumber\\
\lesssim&\|g\|_{\mathcal{B}}\|f_{j}\|_{L_{a}^{p}(\omega_{\alpha})}^{1-p}\bigg[\int_{\lambda\mathbb{D}}|f_{j}(\zeta)|^{p}
\omega_{\alpha+1}(\zeta)\log\frac{2}{1-|\zeta|^{2}}
\int_{\mathbb{D}}\frac{1}{|1-\bar{z}\zeta|^{\eta+1}}d\mu(z)dA(\zeta)\nonumber\\
+&\int_{\lambda\mathbb{D}}|f_{j}(\zeta)|^{p}
\omega_{\alpha+1}(\zeta)\log\frac{2}{1-|\zeta|^{2}}|T_{\mu,\eta-1}(1)^{'}(\zeta)|dA(\zeta)\nonumber\\
+&\int_{\mathbb{D}\backslash\lambda\mathbb{D}}|f_{j}(\zeta)|^{p}\omega_{\alpha}(\zeta)(1-|\zeta|^{2})\log\frac{2}{1-|\zeta|^{2}}
\int_{\mathbb{D}}\frac{1}{|1-\bar{z}\zeta|^{\eta+1}}d\mu(z)dA(\zeta)\nonumber\\
+&\int_{\mathbb{D}\backslash\lambda\mathbb{D}}|f_{j}(\zeta)|^{p}\omega_{\alpha}(\zeta)(1-|\zeta|^{2})\log\frac{2}{1-|\zeta|^{2}}
|T_{\mu,\eta-1}(1)^{'}(\zeta)|dA(\zeta)\bigg]\nonumber\\
\lesssim&\varepsilon\|g\|_{\mathcal{B}}\|f_{j}\|_{L_{a}^{p}(\omega_{\alpha})}^{1-p}\omega_{\alpha+1}(\lambda\mathbb{D})
+2\varepsilon\|g\|_{\mathcal{B}}\|f_{j}\|_{L_{a}^{p}(\omega_{\alpha})},
\end{align*}
which yields that
\begin{equation}\label{RC07}
|I_{1,j}|\lesssim\varepsilon\|g\|_{\mathcal{B}},~\textmd{for}~j>k_{0}.
\end{equation}
For a fixed $r\in(0,1)$, consider an $r$-lattice $\{\xi_{i}\}_{i=1}^{\infty}$. We rearrange the sequence $\{\xi_{i}\}_{i=1}^{\infty}$ such that $|\xi_{1}|\leq|\xi_{2}|\leq\cdots\rightarrow1^{-}$. By using the hypothesis that $\mu$ is a vanishing $\frac{1}{p}$-Carleson measure for $L_{a}^{1}(\omega_{\alpha})$, for the $\varepsilon$ ($\varepsilon$ is first mentioned in \eqref{1c:I2}), there exists $k\in\mathbb{N}$ such that for any $i>k$
\begin{align}\label{RC02}
\frac{\mu(D(\xi_{i},5r))}{(1-|\xi_{i}|^{2})^{\frac{\alpha+2}{p}}}<\varepsilon.
\end{align}
Notice that there exists $r_{0}\in(0,1)$ such that
\begin{align}\label{RC03}
\mathbb{D}\backslash r_{0}\mathbb{D}\subseteq\bigcup_{i=k+1}^{\infty}D(\xi_{i},5r).
\end{align}
Let
\begin{align}\label{NI(z)}
\widehat{I}(\zeta)=(1-|\zeta|^{2})\bigg|\int_{\mathbb{D}}\frac{g(\zeta)-g(z)}{(1-\bar{z}\zeta)^{\eta+2}}d\mu(z)\bigg|.
\end{align}
Take a constant $c$ such that $c\in(\textmd{max}\{0,1-\eta\},1)$.
Note that there exists $\rho\in(0,1)$ such that for any $\zeta\in\mathbb{D}$ and $\rho\leq|\zeta|<1$, it holds that
\begin{align}\label{2NI(z)}
(1-|\zeta|^{2})^{c}<\varepsilon.
\end{align}
From the proof of Lemma \ref{P:bloch}, for $\zeta\in\mathbb{D}$, we have
\begin{align*}
\widehat{I}(\zeta)\lesssim&\|g\|_{\mathcal{B}}(1-|\zeta|^{2})^{c}
\int_{r_{0}\mathbb{D}}\frac{(1-|z|^{2})^{c-1}}{|1-\bar{\zeta}z|^{2c+\eta}}d\mu(z)+
\|g\|_{\mathcal{B}}(1-|\zeta|^{2})^{c}\int_{\mathbb{D}\backslash r_{0}\mathbb{D}}\frac{(1-|z|^{2})^{c-1}}{|1-\bar{\zeta}z|^{2c+\eta}}d\mu(z)
\nonumber\\
\triangleq&\widehat{I}_{1}(\zeta)+\widehat{I}_{2}(\zeta).
\end{align*}
Together with \eqref{RC02}, \eqref{RC03}, \eqref{2NI(z)}, the property of subharmonicity and \cite[Lemma 3.10]{zhu2007}, for $\zeta\in\mathbb{D}$ and $\rho\leq|\zeta|<1$, we deduce
\begin{align*}
\widehat{I}_{1}(\zeta)\leq&\|g\|_{\mathcal{B}}(1-|\zeta|^{2})^{c}\frac{\mu(r_{0}\mathbb{D})}{(1-r_{0})^{c+\eta+1}}
\lesssim\varepsilon\|g\|_{\mathcal{B}}
\end{align*}
and
\begin{align*}
\widehat{I}_{2}(\zeta)\leq&\|g\|_{\mathcal{B}}(1-|\zeta|^{2})^{c}
\sum_{i=k+1}^{\infty}\mu(D(\xi_{i},5r))\sup_{z\in D(\xi_{i},5r)}\frac{(1-|z|^{2})^{c-1}}{|1-\bar{\zeta}z|^{2c+\eta}}\nonumber\\
\lesssim&\|g\|_{\mathcal{B}}(1-|\zeta|^{2})^{c}\sum_{i=k+1}^{\infty}\frac{\mu(D(\xi_{i},5r))}{(1-|\xi_{i}|^{2})^{\alpha+2}}
\int_{D(\xi_{i},6r)}\frac{(1-|u|^{2})^{c+\alpha-1}}{|1-\bar{\zeta}u|^{2c+\eta}}dA(u)\nonumber\\
\asymp&\|g\|_{\mathcal{B}}(1-|\zeta|^{2})^{c}\sum_{i=k+1}^{\infty}\frac{\mu(D(\xi_{i},5r))}{(1-|\xi_{i}|^{2})^{\frac{\alpha+2}{p}}}
\int_{D(\xi_{i},6r)}\frac{(1-|u|^{2})^{c+\eta-2}}{|1-\bar{\zeta}u|^{2c+\eta}}dA(u)\nonumber\\
\lesssim&\varepsilon\|g\|_{\mathcal{B}}
(1-|\zeta|^{2})^{c}\int_{\mathbb{D}}\frac{(1-|u|^{2})^{c+\eta-2}}{|1-\bar{\zeta}u|^{2c+\eta}}dA(u)\nonumber\\
\asymp&\varepsilon\|g\|_{\mathcal{B}}.
\end{align*}
Hence, we have
\begin{align}\label{RCCCC}
\widehat{I}(\zeta)\lesssim\varepsilon\|g\|_{\mathcal{B}},~\textmd{for}~ \zeta\in\mathbb{D}~\textmd{and}~ \rho\leq|\zeta|<1.
\end{align}
There also exists $k'\in\mathbb{N}$ such that for any $j>k'$, $|f_{j}|^{p}<\varepsilon$ on $\rho\mathbb{D}$.
Combining \eqref{property-}, \eqref{c:I2},  \eqref{NI(z)}, \eqref{RCCCC}, Lemma \ref{P:bloch} with \cite[Theorem 4.14]{zhu2007}, for any $j>k'$, we get
\begin{align*}
|I_{2,j}|\leq&\|f_{j}\|_{L_{a}^{p}(\omega_{\alpha})}^{1-p}
\bigg[\int_{\mathbb{D}\backslash \rho\mathbb{D}}|f_{j}(\zeta)|^{p}(1-|\zeta|^{2})^{\alpha}(1-|\zeta|^{2})
\bigg|\int_{\mathbb{D}}\frac{g(z)-g(\zeta)}{(1-\zeta\bar{z})^{\eta+2}}d\mu(z)\bigg|dA(\zeta)\nonumber\\
+&\int_{\rho\mathbb{D}}|f_{j}(\zeta)|^{p}(1-|\zeta|^{2})^{\alpha}(1-|\zeta|^{2})
\bigg|\int_{\mathbb{D}}\frac{g(z)-g(\zeta)}{(1-\zeta\bar{z})^{\eta+2}}d\mu(z)\bigg|dA(\zeta)\bigg]\nonumber\\
\lesssim&\varepsilon\|f_{j}\|_{L_{a}^{p}(\omega_{\alpha})}\|g\|_{\mathcal{B}}
+\varepsilon\omega_{\alpha}(\rho\mathbb{D})\|f_{j}\|_{L_{a}^{p}(\omega_{\alpha})}^{1-p}\|g\|_{\mathcal{B}},
\end{align*}
from which it follows that
\begin{equation}\label{RC14}
|I_{2,j}|\lesssim\varepsilon\|g\|_{\mathcal{B}},~\textmd{for}~j>k'.
\end{equation}
By \eqref{c:I1 and I2}, \eqref{c:I1}, \eqref{c:I2}, \eqref{RC07} and \eqref{RC14}, it yields that $T_{\mu,\beta}:L_{a}^{p}(\omega_{\alpha})\rightarrow L_{a}^{1}(\omega_{\beta})$ is compact, where $p\in(0,1]$ and $\alpha,\beta\in(-1,+\infty)$. Here we finish the sufficiency part.

\vskip 0.1in

In what follows, we consider the necessity part. From Proposition \ref{compact-P:carleson1}, we know that $\mu$ is a vanishing $\frac{1}{p}$-Carleson measure for $L_{a}^{1}(\omega_{\alpha})$.
It is sufficient to show
\[
T_{\mu,\eta-1}(1)\in\mathcal{LB}_{0}^1,
\]
where
\[
\eta=\frac{2+\alpha-p}{p}.
\]
For $z\in\mathbb{D}$, choose the testing functions
$h_{z}$ be the same as in the proof of Theorem \ref{P:standard2}.
Notice that $\|h_{z}\|_{L_{a}^{p}(\omega_{\alpha})}\asymp 1$ and $\{h_{z}\}$ uniformly converges to $0$ on any compact subsets of $\mathbb{D}$ as $z$ approaches to $\mathbb{T}$. Then
$
\lim\limits_{|z|\rightarrow1^{-}}\|T_{\mu,\beta}(h_{z})\|_{L_{a}^{1}(\omega_{\beta})}=0
$
by the compactness of $T_{\mu,\beta}$. Hence, for any $g\in\mathcal{B}$, we get
\begin{align}\label{RC01}
|\langle T_{\mu,\beta}(h_{z}),g\rangle_{L^{2}(\omega_{\beta})}|=
(1-|z|^{2})\bigg|\int_{\mathbb{D}}\frac{\overline{g(u)}}{(1-u\bar{z})^{\eta+2}}d\mu(u)\bigg|
\leq \|T_{\mu,\beta}(h_{z})\|_{L_{a}^{1}(\omega_{\beta})}\|g\|_{\mathcal{B}}\to 0,
\end{align}
as $|z|\rightarrow1^{-}$.
Together with \eqref{2s6}, \eqref{NI(z)}, \eqref{RCCCC}, \eqref{RC01} and \cite[Theorem 5.7]{zhu2007}, we obtain
\begin{align}\label{RC05}
\lim_{|z|\rightarrow1^{-}}(1-|z|^{2})\log\frac{2}{1-|z|^{2}}|T_{\mu,\eta}(1)(z)|=0.
\end{align}
Since $\mu$ is a vanishing $\frac{1}{p}$-Carleson measure for $L_{a}^{1}(\omega_{\alpha})$, combining with \cite[Lemma 3.10]{zhu2007}, we know
\begin{align}\label{RC06}
\lim_{|z|\rightarrow1^{-}}(1-|z|^{2})\log\frac{2}{1-|z|^{2}}|T_{\mu,\eta-1}(1)(z)|=0.
\end{align}
It follows from \eqref{2s11}, \eqref{RC05} and \eqref{RC06} that $T_{\mu,\eta-1}(1)\in\mathcal{LB}_{0}^{1}$.
Thus, we complete the proof.
\end{proof}

Finally, we give the proof of Theorem \ref{T:compact standard}.
\begin{proof}[Proof of Theorem \ref{T:compact standard}]
Necessity. Let $-1<\beta\leq\alpha<\infty$ and $t=\frac{\beta+2}{\alpha+2}$. Choose a constant $c\in(\frac{1}{p},\infty)$. For $z\in\mathbb{D}$, take the same testing functions
$f_{z}$ and $g_{z}$ as ones in the proof of Theorem \ref{T:standard:two}.
Note that
\[
\|f_{z}\|_{L_{a}^{p}(\omega_{\alpha})}\asymp1
\]
and $\{f_{z}\}$ uniformly converges to $0$ on any compact subsets of $\mathbb{D}$ as $|z|\rightarrow1^{-}$. Then we deduce
\[
\lim\limits_{|z|\rightarrow1^{-}}\|T_{\mu,\beta}(f_{z})\|_{L_{a}^{q}(\omega_{\beta})}=0.
\]
Together with \eqref{ff_z} and $\textmd{H}\ddot{\textmd{o}}\textmd{lder's}$ inequality, for $r\in(0,1)$, we deduce
\[
\frac{\mu(D(z,r))}{(1-|z|^{2})^{(\alpha+2)(\frac{1}{p}+\frac{t}{q'})}}\lesssim\langle T_{\mu,\beta}(f_z),g_z\rangle_{L^{2}(\omega_{\beta})}\leq
\|T_{\mu,\beta}(f_{z})\|_{L^q(\omega_{\beta})}\|g_{z}\|_{L^{q'}(\omega_{\beta})}\rightarrow0,~~\textmd{as}~|z|\rightarrow1^{-},
\]
which yields that $\mu$ is a vanishing $(\frac{1}{p}+\frac{t}{q'})$-Carleson measure for $L_{a}^{1}(\omega_{\alpha})$.

Sufficiency. Assume that $\mu$ is a vanishing $(\frac{1}{p}+\frac{t}{q'})$-Carleson measure for $L_{a}^{1}(\omega_{\alpha})$. Let $\delta\in(0,1)$. Take a $\delta$-lattice $\{\xi_{i}\}_{i=1}^{\infty}$ such that $|\xi_{1}|\leq|\xi_{2}|\leq\cdots\rightarrow1^{-}$. Then
for any given $\varepsilon>0$, there exists $k\in\mathbb{N}$ such that
\begin{align}\label{lat1}
\frac{\mu(D(\xi_{i},5\delta))}{(1-|\xi_{i}|^{2})^{(\alpha+2)(\frac{1}{p}+\frac{t}{q'})}}<\varepsilon,
~\textmd{for~any~}i>k.
\end{align}
Moreover, there exists $r_{0}\in(0,1)$ such that
\begin{align}\label{latt1}
\mathbb{D}\backslash D(0,r_{0})\subseteq \bigcup_{i=k+1}^{\infty}D(\xi_{i},5\delta).
\end{align}
Let $\{f_{n}\}_{n=1}^{\infty}$ be a bounded sequence in $L_{a}^{p}(\omega_{\alpha})$ such that $\|f_{n}\|_{L_{a}^{p}(\omega_{\alpha})}\leq1$ and $\{f_{n}\}_{n=1}^{\infty}$ uniformly converges to $0$ on any compact subsets of $\mathbb{D}$. To show $T_{\mu,\beta}:L_{a}^{p}(\omega_{\alpha})\rightarrow L_{a}^{q}(\omega_{\beta}) $ is compact, it is sufficient to prove that
\[
\lim\limits_{n\rightarrow\infty}\|T_{\mu,\beta}(f_{n})\|_{L_{a}^{q}(\omega_{\beta})}=0.
\]
Let
$
\mu_{r_{0}}=\chi_{D(0,r_{0})}\mu
$
and
$
\mu_{\tilde{r}_{0}}=\chi_{\mathbb{D}\backslash D(0,r_{0})}\mu.
$
According to \eqref{lat1}, \eqref{latt1}, the property of subharmonicity and \cite[Proposition 4.17]{zhu2007}, we deduce
\begin{align}\label{v-carleson}
\|f_{n}\|_{L^{1}(\mu_{\tilde{r}_{0}})}
\leq&\sum_{i=k+1}^{\infty}\mu(D(\xi_{i},5\delta))\sup_{z\in D(\xi_{i},5\delta)}|f_{n}(z)|\nonumber\\
\lesssim&\sum_{i=k+1}^{\infty}\frac{\mu(D(\xi_{i},5\delta))}{(1-|\xi_{i}|^{2})^{\alpha+2}}
\int_{D(\xi_{i},6\delta)}|f_{n}(z)|\omega_{\alpha}(z)dA(z)\nonumber\\
\asymp&\sum_{i=k+1}^{\infty}\frac{\mu(D(\xi_{i},5\delta))}{(1-|\xi_{i}|^{2})^{\frac{(\alpha+2)(q'+pt)}{pq'}}}
\int_{D(\xi_{i},6\delta)}|f_{n}(z)|(1-|z|^{2})^{\frac{(\alpha+2)(q'+pt)}{pq'}-2}dA(z)\nonumber\\
\lesssim&\varepsilon\int_{\mathbb{D}}|f_{n}(z)|(1-|z|^{2})^{\frac{(\alpha+2)(q'+pt)}{pq'}-2}dA(z)
\nonumber\\\lesssim&\varepsilon\|f_{n}\|_{L_{a}^{\frac{pq'}{q'+pt}}(\omega_{\alpha})},~\textmd{for}~n\in\mathbb{N}.
\end{align}
Notice that $\mu_{\tilde{r}_{0}}$ is a $(\frac{q'+pt}{pq'})$-Carleson measure for $L_{a}^{1}(\omega_{\alpha})$. Then \eqref{density} also holds by substituting $\mu_{\tilde{r}_{0}}$ for $\mu$. Hence, combining with \eqref{v-carleson}, we conclude that
\begin{align}\label{v-carleson1}
\|T_{\mu_{\tilde{r}_{0}},\beta}\|_{L_{a}^{p}(\omega_{\alpha})\rightarrow L_{a}^{q}(\omega_{\beta})}
\lesssim\|Id\|_{L_{a}^{\frac{pq'}{q'+pt}}(\omega_{\alpha})\rightarrow L^{1}(\mu_{\tilde{r}_{0}})}\lesssim\varepsilon.
\end{align}
Observe that there exists $k'\in\mathbb{N}$ such that $|f_{n}|<\varepsilon$ on $r_{0}\mathbb{D}$ for any $n>k'$. Since $\mu_{r_{0}}$ is also a $(\frac{1}{p}+\frac{t}{q'})$-Carleson measure for $L_{a}^{1}(\omega_{\alpha})$, together with Fubini's Theorem,
$\textmd{H}\ddot{\textmd{o}}\textmd{lder's}$ inequality and \eqref{prop-}, for any $g\in L_{a}^{q'}(\omega_{\beta})$, it yields that
\begin{align}\label{v-carleson2}
|\langle T_{\mu_{r_{0}},\beta}(f_{n}),g\rangle_{L^{2}(\omega_{\beta})}|
\leq&\int_{\mathbb{D}}|f_{n}(z)g(z)|d\mu_{r_{0}}(z)\nonumber\\
\leq&\|f_{n}\|_{L^{\frac{q'+pt}{q'}}(\mu_{r_{0}})}\|g\|_{L^{\frac{q'+pt}{pt}}(\mu_{r_{0}})}\nonumber\\
\lesssim&\varepsilon\big(\int_{\mathbb{D}}|g(z)|^{\frac{q'}{t}}\omega_{\alpha}(z)dA(z)\big)^{\frac{t}{q'}}\nonumber\\
\lesssim&\varepsilon\|g\|_{L^{q'}(\omega_{\beta})},~~\textmd{for~}n>k'.
\end{align}
Combining \eqref{v-carleson1} with \eqref{v-carleson2}, we obtain
\begin{align*}
\|T_{\mu,\beta}(f_{n})\|_{L_{a}^{q}(\omega_{\beta})}
\lesssim&\|T_{\mu_{r_{0}},\beta}(f_{n})\|_{L_{a}^{q}(\omega_{\beta})}+\|T_{\mu_{\tilde{r}_{0}},\beta}(f_{n})\|_{L_{a}^{q}(\omega_{\beta})}
\nonumber\\
\leq&\sup_{\{g:\|g\|_{L_{a}^{q'}(\omega_{\beta})}\leq1\}}|\langle T_{\mu_{r_{0}},\beta}(f_{n}),g\rangle_{L^{2}(\omega_{\beta})}|
+\|T_{\mu_{\tilde{r}_{0}},\beta}\|_{L_{a}^{p}(\omega_{\alpha})\rightarrow L_{a}^{q}(\omega_{\beta})}
\|f_{n}\|_{L_{a}^{p}(\omega_{\alpha})}\nonumber\\
\lesssim&\varepsilon,~\textmd{for~}n>k'.
\end{align*}
Thus, we conclude that $\lim\limits_{n\rightarrow\infty}\|T_{\mu,\beta}(f_{n})\|_{L_{a}^{q}(\omega_{\beta})}=0$,
which completes the proof.
\end{proof}

\section*{Acknowledgements}
This work is partially supported by National Natural Science Foundation of China.
S.WANG thanks Professor Yongjiang Duan for his guidance and continuous encouragement.
Z.WANG is partially supported by Natural Science Basic Research Program of Shaanxi (Program No. 2020JM-278).
He also thanks the support of the School of Mathematics and Statistics, Northeast Normal University for his visit to Changchun.


\begin{thebibliography}{10}



\bibitem{DGWW}
Yongjiang Duan, Kunyu Guo, Siyu Wang, Zipeng Wang,
\newblock Toeplitz operators on the weighted Bergman spaces,
\newblock {arXiv:2002.06771} Feb 2020.

\bibitem{D1}
Peter~L. Duren, Alexander Schuster,
\newblock {Bergman Spaces}, {Math. Surveys Monogr.},
\newblock vol. 100, American Mathematical Society, Providence, RI, 2004.

\bibitem{FW}
Xiang Fang, Zipeng Wang,
\newblock Two weight inequalities for the {B}ergman projection with
doubling measures,
\newblock {Taiwanese J. Math.} 19 (3) (2015) 919--926.


\bibitem{Lu}
Daniel~H. Luecking,
\newblock Embedding theorems for spaces of analytic functions via {K}hinchine's
  inequality,
\newblock {Michigan Math. J.} 40 (2) (1993) 333--358.





\bibitem{L1987}
Daniel~H. Luecking,
\newblock Trace ideal criteria for {T}oeplitz operators,
\newblock {J. Funct. Anal.} 73 (2) (1987) 345--368.





\bibitem{Pau}
Jordi Pau, Ruhan Zhao,
\newblock Carleson measures and {T}oeplitz operators for weighted {B}ergman
  spaces on the unit ball,
\newblock {Michigan Math. J.} 64 (4) (2015) 759--796.


\bibitem{PR}
Jos\'{e}~\'{A}ngel Pel\'{a}ez, Jouni R\"{a}tty\"{a},
\newblock Weighted {B}ergman spaces induced by rapidly increasing weights,
\newblock {Mem. Amer. Math. Soc.} 227 (1066) (2014) vi, 124 pp.


\bibitem{PRS}
Jos\'{e}~\'{A}ngel Pel\'{a}ez, Jouni R\"{a}tty\"{a}, Kian Sierra,
\newblock Berezin transform and {T}oeplitz operators on {B}ergman spaces
  induced by regular weights,
\newblock {J. Geom. Anal.} 28 (1) (2018) 656--687.





\bibitem{WL-2010}
Xiongliang Wang, Taishun Liu,
\newblock Toeplitz operators on {B}loch-type spaces in the unit ball of {${\bf
  C}^n$},
\newblock {J. Math. Anal. Appl.} 368 (2) (2010) 727--735.


\bibitem{WZZ-2006}
Zhijian Wu, Ruhan Zhao, Nina Zorboska,
\newblock Toeplitz operators on {B}loch-type spaces,
\newblock {Proc. Amer. Math. Soc.} 134 (12) (2006) 3531--3542.





\bibitem{Zhu1988}
Kehe Zhu,
\newblock Positive {T}oeplitz operators on weighted {B}ergman spaces of bounded
  symmetric domains,
\newblock {J. Operator Theory} 20 (2) (1988) 329--357.







\bibitem{zhu2007}
Kehe Zhu,
\newblock {Operator Theory in Function Spaces}, second ed., {Math. Surveys Monogr.},
\newblock vol. 138, American Mathematical Society, Providence, RI, 2007.

\end{thebibliography}

\end{document}